\titleformat*{\section}{\large\bfseries}
\titleformat*{\subsection}{\bfseries}
\theoremstyle{plain} 
\newtheorem{theorem}{Theorem}[section]
\newtheorem*{theorem*}{Theorem}
\newtheorem{lemma}{Lemma}[section]
\newtheorem*{lemma*}{Lemma}
\newtheorem{proposition}{Proposition}[section]
\newtheorem*{proposition*}{Propposition}
\newtheorem*{acknowledgements}{Acknowledgements}
\theoremstyle{definition}
\newtheorem*{definition*}{Definition}
\newtheorem*{property*}{性質}
\newtheorem{remark}{Remark}[section]
\newtheorem*{remark*}{Remark}
\newtheorem{example}{Example}
\newtheorem*{example*}{Example}
\newtheorem{assume}{Assumption}[section]
\newtheorem*{assume*}{Assumption}
\title{Information geometry of warped product spaces}
\author{Yasuaki Fujitani\thanks{Department of Mathematics, Osaka University, Osaka 560-0043, Japan (\texttt{u197830k@ecs.osaka-u.ac.jp})}}
\begin{document}
\maketitle
\begin{abstract}
    Information geometry is an important tool to study statistical models. There are some important examples in statistical models which are regarded as warped products. In this paper, we study 
    information geometry of warped products. We consider the case where the warped product and its fiber space are equipped with dually flat connections and, in the particular case of a cone, characterize the connections on the base space $\mathbb{R}_{>0}$. The resulting connections turn out to be the $\alpha$-connections with $\alpha = \pm{1}$.
\end{abstract}
\tableofcontents
\section{Introduction}
Recently, the study of spaces consisting of probability measures is getting more attention. As tools to investigate such spaces, there are two famous theories in geometry:
information geometry and Wasserstein geometry. Information geometry is mainly concerned with finite dimensional statistical models and Wasserstein geometry is concerned with infinite dimensional spaces of probability measures.
We can compare these two geometries, for example, on Gaussian distributions.

This paper concerns information geometry of a warped product and, in particular, on a cone, which is a kind of warped product of the line $\mathbb{R}_{>0}$ and a manifold. 
Under some natural assumptions, we characterize connections on the line, with which warped products are constructed.
The assumption we set is different from that of \cite{leo} and matches examples of statistical models.

Examples of warped product metrics include the denormalizations of the Fisher metric, the Bogoliubov-Kubo-Mori metric and the Fisher metric on the Takano Gaussian space, which is a set of multivariate Gaussian distributions with restricted parameters. 
Besides these examples, there are some more statistical models represented as warped products.
In \cite{takatsu2}, it was shown that the Wasserstein Gaussian space, which is the set of multivariate Gaussian distributions on $\mathbb{R}^n$ with mean zero equipped with the $L^2$-Wasserstein metric, has a cone structure and in \cite{location} the relations between Fisher metrics of location scale models and warped product metrics are studied.
It seems that warped products get more attention in the field of statistical models than before.

Although information geometry is studied on real manifolds, the theory of statistical manifolds is studied in the field of affine geometry and statistical structures on complex manifolds get more attention as in \cite{furuhata1}.
Also in this field, warped products are important since they play an important role in the theory of submanifolds in complex manifolds, for example, CR submanifold theory as in \cite{chen1}.
There are many researches extending the theories of CR submanifolds in K\"{a}hler manifolds to submanifolds in holomorphic statistical manifolds as in \cite{chen2}.
Statistical structures in \cite{furuhata2} and the structures cultivated in this paper are slightly different because we do not care the compatibility of statistical structures and complex structures. This compatibility is expressed in the definition of holomorphic statistical structures in \cite{furuhata1}.

This paper is organized as follows. In Section 2, we briefly review information geometry. Section 3 is devoted to some formulas in warped products. Then in Section 4, we study cones and consider necessary conditions for making both the cone and the fiber space to
be dually flat. This necessary condition states that there are only two possible connections on the line. The following theorem is one of our main results.
\begin{theorem*}\rm{(Theorem 4.1)}
    Under Assumption \ref{assume2}, we have 
    \begin{equation*}
        D_{\partial_t}\partial_t = \frac{1}{t}\frac{\partial}{\partial t} \mbox{ or } -\frac{1}{t}\frac{\partial}{\partial t},
    \end{equation*}
    where $t$ is the natural coordinate on the line $\mathbb{R}_{>0}$, which is the base space of the warped product.
\end{theorem*}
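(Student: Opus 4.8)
The plan is to work in a coordinate system $(t,x^{1},\dots ,x^{n})$ adapted to the warped product, where $t$ is the coordinate on the base $\mathbb{R}_{>0}$, the $x^{i}$ are coordinates on the fiber $F$, and the cone metric reads $g=dt^{2}+t^{2}g_{F}$. Writing $\partial_{t}$ and $\partial_{i}$ for the coordinate fields, I expect Assumption \ref{assume2} together with the warped-product formulas of Section 3 to force $D$ into the form
\begin{equation*}
 D_{\partial_{t}}\partial_{t}=\phi\,\partial_{t},\qquad D_{\partial_{t}}\partial_{i}=D_{\partial_{i}}\partial_{t}=\rho\,\partial_{i},\qquad D_{\partial_{i}}\partial_{j}=\nabla^{F}_{\partial_{i}}\partial_{j}+\sigma\,g_{F}(\partial_{i},\partial_{j})\,\partial_{t},
\end{equation*}
for three functions $\phi,\rho,\sigma$ of $t$ alone (torsion-freeness is what makes the two mixed terms coincide). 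The whole theorem then amounts to showing $\phi=\pm 1/t$, so the strategy is to extract a closed system of equations for $(\phi,\rho,\sigma)$ from the two hypotheses ``the cone is dually flat'' and ``the fiber is dually flat,'' and to solve it.

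First I would impose flatness of $D$ on the cone. Computing the mixed curvature components gives
\begin{equation*}
 R^{D}(\partial_{t},\partial_{i})\partial_{t}=(\rho'+\rho^{2}-\phi\rho)\,\partial_{i},\qquad R^{D}(\partial_{t},\partial_{i})\partial_{j}=(\sigma'+(\phi-\rho)\sigma)\,g_{F}(\partial_{i},\partial_{j})\,\partial_{t},
\end{equation*}
so flatness yields $\rho'+\rho^{2}-\phi\rho=0$ and $\sigma'+(\phi-\rho)\sigma=0$. The purely fiber component collapses, after using that $\nabla^{F}$ is torsion-free and $\nabla^{F}g_{F}$ is totally symmetric, to $R^{D}(\partial_{i},\partial_{j})\partial_{k}=R^{F}(\partial_{i},\partial_{j})\partial_{k}+\sigma\rho\,(g_{F}(\partial_{j},\partial_{k})\partial_{i}-g_{F}(\partial_{i},\partial_{k})\partial_{j})$. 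Here the hypothesis that the fiber is dually flat enters decisively: $R^{F}=0$, so flatness of the cone forces $\sigma\rho=0$. This single relation is what will produce the dichotomy.

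Next I would use that $(g,D)$ is a statistical structure on the cone, i.e. the cubic tensor $C=Dg$ is totally symmetric. The only component carrying new information is $(D_{\partial_{t}}g)(\partial_{i},\partial_{j})=(D_{\partial_{i}}g)(\partial_{t},\partial_{j})$, which evaluates to $2t(1-\rho t)=-\rho t^{2}-\sigma$, that is $\sigma=\rho t^{2}-2t$. Combining this with $\sigma\rho=0$ splits into exactly two cases. If $\sigma=0$ then $\rho=2/t$, and $\rho'+\rho^{2}-\phi\rho=0$ gives $\phi=1/t$; if $\rho=0$ then $\sigma=-2t$, and $\sigma'+(\phi-\rho)\sigma=0$ gives $\phi=-1/t$. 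The case $\sigma=\rho=0$ is excluded since it contradicts $\sigma=\rho t^{2}-2t$. Hence $D_{\partial_{t}}\partial_{t}=\pm\frac{1}{t}\partial_{t}$, as claimed.

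The main obstacle is not any single calculation but making sure the bookkeeping of Section 3 really closes up into the small system above: one has to verify that the remaining curvature and Codazzi components impose no further constraint (otherwise a legitimate solution might be killed), and that both surviving solutions are genuinely realized rather than spurious. The clean pressure point is the identity $\sigma\rho=0$ coming from the dual flatness of the fiber, which is exactly where the two admissible connections separate; afterwards, matching $(\phi,\rho,\sigma)$ against the decomposition of the Amari--Chentsov tensor on the cone is what will identify the two answers with the $\alpha=\pm 1$ connections promised in the abstract.
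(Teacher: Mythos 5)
Your endgame is correct and the two solutions you land on ($\phi=1/t$ with $\rho=2/t,\ \sigma=0$, and $\phi=-1/t$ with $\rho=0,\ \sigma=-2t$) are exactly the two connections the paper finds. Your route is also genuinely different from the paper's: you close the system with the total symmetry of $Dg$ (the Codazzi relation $\sigma=\rho t^{2}-2t$) together with the single algebraic relation $\sigma\rho=0$ coming from $R={}^FR=0$ on the fiber directions, whereas the paper never invokes $Dg$ directly and instead extracts everything from a Gauss-type equation for the affine second fundamental form $I\hspace{-.1em}I(V,W)=\mbox{Hor}\,D_VW$ and from $G(R(V,X)X,V)=0$.

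However, there is a genuine gap, and it sits precisely where the paper does most of its work. Your ansatz assumes that $P_{\partial_t}$ acts on vertical vectors as a scalar (so $D_{\partial_t}\partial_i=\rho\,\partial_i$), that $\mbox{Hor}\,D_{\partial_i}\partial_j$ is proportional to $g_F(\partial_i,\partial_j)\,\partial_t$, and that $\phi,\rho,\sigma$ depend on $t$ alone. None of this follows from Assumption 4.2 together with the Section 3 formulas: those only give $D_XV=\frac{1}{t}V+P_XV$ with $P_XV$ an a priori arbitrary ($g_F$-symmetric) vertical endomorphism, and $\mbox{Hor}\,D_VW=-t\,g_F(V,W)\partial_t+\mbox{Hor}\,P_VW$ with $\mbox{Hor}\,P_VW$ an a priori arbitrary symmetric bilinear form valued in $\mathbb{R}\partial_t$. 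Reducing these to scalar multiples of the identity and of $g_F$ is the content of the paper's Lemmas 4.1--4.3, and it is proved there only by \emph{using} the flatness hypotheses (the symmetric/antisymmetric split of the Gauss equation into (7) and (8), a polarization argument in $U$, $V$, $U+V$, and the vanishing of $G(R(V,X)X,V)$ and $G(R^*(V,X)X,V)$). So your derivation, as written, only shows that connections already of this special rotationally symmetric form must have $\phi=\pm 1/t$; the theorem requires ruling out all other connections satisfying Assumption 4.2. Your closing worry is aimed in the wrong direction: the danger is not that extra curvature components kill a legitimate solution, but that the ansatz silently discards candidates. (A smaller point you share with the paper: the step yielding $\sigma\rho=0$ needs $\dim F\ge 2$, since it reads off the coefficient of $g_F(\partial_j,\partial_k)\partial_i-g_F(\partial_i,\partial_k)\partial_j$.) If you first prove the ansatz --- for instance by writing $P_X\partial_i=A_i^{\,j}\partial_j$, using $G(P_VW,X)=G(W,P_VX)$ to express $\mbox{Hor}\,P_VW$ through $A$, and then running the paper's polarization trick to show $A$ is scalar --- your argument becomes a complete and rather clean alternative proof.
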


By observing examples, these two connections turn out to be the $\alpha$-connections with $\alpha = \pm{1}$.
An analogous characterization for the Takano Gaussian space is also considered in Section 5. In Section 6, we discuss dually flat connections on the Wasserstein Gaussian space. We remark that, although it is known in \cite{tayebi} that there is no dually flat proper doubly warped Finsler manifold, what they actually proved is that some coordinates cannot be dual affine coordinates. 
Thus our claims do not contradict their claim.  We also discuss this point in Section 6.
%%%%%%%%%%%%%%%%%for version2 add
In Section 7, we study two-dimensional warped products as an appendix.
%%%%%%%%%%%%%%%%%

\section{Preliminaries}
\subsection{Information geometry}\label{infogeo}
We briefly review the basics of information geometry, we refer to \cite{amari} for further reading.
Let $(M,g)$ be a Riemannian manifold and $\nabla$ be an affine connection of $M$. 
$\mathfrak{X}(M)$ denotes the set of $C^\infty$ vector fields on $M$.
We define another affine connection $\nabla^*$ by
\begin{equation*}
    Xg(Y,Z) = g(\nabla_XY,Z) + g(Y,\nabla^*_XZ)
\end{equation*}
for $X,Y,Z\in\mathfrak{X}(M)$. We call $\nabla^*$ the \emph{dual connection} of $\nabla$. We define the torsion and the curvature of $\nabla$ by
\begin{equation*}
    T(X,Y) := \nabla_XY - \nabla_YX - [X,Y],\quad
    R(X,Y)Z := [\nabla_X,\nabla_Y]Z - \nabla_{[X,Y]}Z,
\end{equation*}
respectively. 
If $R$ satisfies
\begin{equation*}
    R(X,Y)Z = k\{g(Y,Z)X - g(X,Z)Y\}
\end{equation*}
for some $k\in\mathbb{R}$ and all $X,Y,Z\in\mathfrak{X}(M)$, $(M,g,\nabla)$ is called a space of constant curvature $k$. We summarize some important facts on $\nabla$ and $\nabla^*$ in the following.
\begin{proposition}\label{levi}
    Let $\nabla$ and $\nabla^*$ be dual affine connections of $M$. If two of the following conditions hold true, then the other two of them also hold true:
    \begin{itemize}
        \item $\nabla$ is torsion free,
        \item $\nabla^*$ is torsion free,
        \item $\nabla g$ is a symmetric tensor,
        \item $\frac{\nabla + \nabla^*}{2}$ is the Levi-Civita connection of $g$.
    \end{itemize}
\end{proposition}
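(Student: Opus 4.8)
The plan is to reduce all four conditions to simple linear statements about the two torsion tensors $T$ and $T^*$, and then to observe that any two of these statements force $T = T^* = 0$. Throughout I would write $K(X,Y) := \nabla^*_X Y - \nabla_X Y$ for the difference tensor (which is genuinely tensorial, being a difference of two connections) and abbreviate $\nabla^0 := \frac{\nabla + \nabla^*}{2}$.

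First I would record the two dual forms of the defining relation, namely $Xg(Y,Z) = g(\nabla_X Y, Z) + g(Y, \nabla^*_X Z)$ and, since $(\nabla^*)^* = \nabla$, also $Xg(Y,Z) = g(\nabla^*_X Y, Z) + g(Y, \nabla_X Z)$. Averaging these two identities gives $Xg(Y,Z) = g(\nabla^0_X Y, Z) + g(Y, \nabla^0_X Z)$, so that $\nabla^0$ is \emph{always} metric compatible. Because the Levi-Civita connection is the unique metric, torsion-free connection, the fourth condition is therefore equivalent to the vanishing of the torsion of $\nabla^0$; a direct expansion shows this torsion equals $\frac12(T + T^*)$, so the fourth condition is equivalent to $T = -T^*$.

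Next I would treat the symmetry of $\nabla g$. Substituting the duality relation into $(\nabla_X g)(Y,Z) = Xg(Y,Z) - g(\nabla_X Y, Z) - g(Y, \nabla_X Z)$ yields the clean formula $(\nabla_X g)(Y,Z) = g(Y, K(X,Z))$. Since $g$ is symmetric, the cubic form $(\nabla_X g)(Y,Z)$ is automatically symmetric in $Y$ and $Z$; hence it is a symmetric tensor precisely when it is also symmetric under $X \leftrightarrow Y$, and using the automatic symmetry this reduces to $K(X,Y) = K(Y,X)$. Finally $K(X,Y) - K(Y,X) = T^*(X,Y) - T(X,Y)$, so the symmetry of $\nabla g$ is equivalent to $T = T^*$.

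With these reformulations the four conditions read, respectively, $T = 0$, $T^* = 0$, $T = T^*$ and $T = -T^*$. A short check then settles the claim: each of the six possible pairs of these equations implies $T = T^* = 0$ (for instance $T = T^*$ together with $T = -T^*$ gives $2T = 0$), and $T = T^* = 0$ in turn makes all four equations hold simultaneously. The only step requiring genuine care is the precise reading of ``$\nabla g$ is a symmetric tensor'' together with the derivation of $(\nabla_X g)(Y,Z) = g(Y, K(X,Z))$ and its reduction to the symmetry of $K$; the remaining arguments are routine bookkeeping, and I do not expect a serious obstacle.
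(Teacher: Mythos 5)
The paper states this proposition without proof (it is quoted as a standard fact, essentially from Amari--Nagaoka), so there is no in-text argument to compare against. Your argument is correct and complete: the identity $(\nabla_X g)(Y,Z)=g(Y,\nabla^*_XZ-\nabla_XZ)$, the reduction of total symmetry to symmetry of $K$ (using that symmetry in the last two slots together with symmetry in the first two generates full symmetry), the computation of the torsion of $\tfrac{\nabla+\nabla^*}{2}$ as $\tfrac12(T+T^*)$, and the metric compatibility of the average are all verified accurately, and the final check that any two of $T=0$, $T^*=0$, $T=T^*$, $T=-T^*$ force all four is immediate.
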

\begin{proposition}\label{curvature_prop}
    Let $(M,g,\nabla,\nabla^*)$ be a Riemannian manifold with dual affine connections. 
    The curvature with respect to $\nabla$ vanishes if and only if the curvature with respect to $\nabla^*$ vanishes.
\end{proposition}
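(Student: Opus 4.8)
The plan is to establish the pointwise duality identity relating the two curvature tensors and then invoke the nondegeneracy of $g$. Writing $R^*$ for the curvature of $\nabla^*$, I claim that
\begin{equation*}
    g(R(X,Y)Z,W) = -g(Z,R^*(X,Y)W)
\end{equation*}
for all $X,Y,Z,W\in\mathfrak{X}(M)$. Granting this identity, the proposition is immediate: if $R=0$, then the right-hand side vanishes for every $W$, and since $g$ is nondegenerate we conclude $R^*(X,Y)W=0$ for all $X,Y,W$, i.e. $R^*=0$. The converse follows by symmetry of the roles of $\nabla$ and $\nabla^*$, using that $(\nabla^*)^*=\nabla$.

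To prove the identity I would expand $g(R(X,Y)Z,W)$ term by term using only the defining relation $Ug(V,W)=g(\nabla_U V,W)+g(V,\nabla^*_U W)$. The idea is to move every covariant derivative off the first slot and onto the second, trading each $\nabla$ for a $\nabla^*$ at the cost of a derivative of the function $g$. Concretely, applying the defining relation twice to $g(\nabla_X\nabla_Y Z,W)$ gives
\begin{equation*}
    g(\nabla_X\nabla_Y Z,W)=XYg(Z,W)-Xg(Z,\nabla^*_Y W)-Yg(Z,\nabla^*_X W)+g(Z,\nabla^*_Y\nabla^*_X W),
\end{equation*}
and subtracting the expression obtained by interchanging $X$ and $Y$ cancels the mixed first-order terms, leaving $[X,Y]g(Z,W)$ together with the second-order $\nabla^*$ terms. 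Treating the remaining piece $-g(\nabla_{[X,Y]}Z,W)$ the same way produces $-[X,Y]g(Z,W)+g(Z,\nabla^*_{[X,Y]}W)$, whose first summand cancels the surviving $[X,Y]g(Z,W)$. Collecting what remains reconstructs exactly $-g(Z,R^*(X,Y)W)$.

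The computation is purely mechanical, so the only real care needed is bookkeeping: one must keep track of which terms carry $\nabla^*$ acting on $W$ and which are genuine derivatives of the function $g(Z,W)$, and verify that the four mixed terms of the form $Xg(Z,\nabla^*_Y W)$ cancel in pairs under the antisymmetrization in $X$ and $Y$. I do not expect any conceptual obstacle; the entire content of the statement is encoded in the curvature duality identity above, and once that is in hand the equivalence $R=0\iff R^*=0$ is a one-line consequence of nondegeneracy. It is worth noting that this argument uses neither torsion-freeness nor any symmetry of $\nabla g$, so the proposition holds in the full generality stated.
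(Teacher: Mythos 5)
Your argument is correct and is precisely the standard one: the identity $g(R(X,Y)Z,W)=-g(Z,R^*(X,Y)W)$ follows exactly as you compute, and the paper (which states this proposition without proof, as a known fact from the information-geometry literature) has nothing different in mind. Your closing observation that neither torsion-freeness nor symmetry of $\nabla g$ is needed is also accurate.
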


    Let $(M,g,\nabla,\nabla^*)$ be a Riemannian manifold with dual affine connections. If the torsion and the curvature with respect to $\nabla$
    and those of $\nabla^*$ all vanish, then we say that $(M,g,\nabla,\nabla^*)$ is \emph{dually flat}.
 For a local coordinate system $(U; x_1,\cdots, x_n)$, if the Christoffel symbols $\{\Gamma^k_{ij}\}$ of $\nabla$
    vanish, we call it \emph{$\nabla$-affine coordinates}.

\begin{proposition}
    Let $(M,g,\nabla,\nabla^*)$ be a Riemannian manifold with dual affine connections. If it is dually flat, then there exist $\nabla$-affine coordinates $(x_i)$ and $\nabla^*$-affine coordinates $(y_j)$ such that 
    \begin{equation*}
        g\left(\frac{\partial}{\partial x_i},\frac{\partial}{\partial y_j}\right) = \delta_{ij}.
    \end{equation*}
\end{proposition}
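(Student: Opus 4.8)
The plan is to produce the desired coordinates explicitly, starting from the $\nabla$-affine side and manufacturing the dual coordinates via a Hessian potential. First I would use the hypothesis that $\nabla$ is flat (torsion-free with vanishing curvature, which is part of dual flatness) to fix a $\nabla$-affine coordinate system $(x_1,\dots,x_n)$ on a neighborhood $U$; in these coordinates all Christoffel symbols of $\nabla$ vanish, so $\nabla_{\partial_{x_i}}\partial_{x_j}=0$. Write $g_{ij}=g(\partial_{x_i},\partial_{x_j})$ for the components of the metric.

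Next I would record the key symmetry. Since the manifold is dually flat, both $\nabla$ and $\nabla^*$ are torsion-free, so by Proposition \ref{levi} the tensor $\nabla g$ is symmetric, i.e. $(\nabla_X g)(Y,Z)=(\nabla_Y g)(X,Z)$. Evaluating in the $\nabla$-affine frame, where the covariant derivative reduces to the ordinary coordinate derivative, this reads $\partial_{x_i}g_{jk}=\partial_{x_j}g_{ik}$ for all $i,j,k$. This is exactly the condition that, for each fixed $k$, the $1$-form $\sum_j g_{jk}\,dx_j$ is closed; shrinking $U$ if necessary, the Poincar\'e lemma yields functions $y_k$ with $\partial_{x_j}y_k=g_{jk}$. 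Because the Jacobian matrix $(\partial y_k/\partial x_j)=(g_{jk})$ is symmetric and positive definite, $(y_1,\dots,y_n)$ is a genuine coordinate system, with inverse Jacobian $(\partial x_l/\partial y_j)=(g^{lj})$, the inverse metric.

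Finally I would verify the two asserted properties. Expanding $\partial_{y_j}=\sum_l g^{lj}\partial_{x_l}$ gives $g(\partial_{x_i},\partial_{y_j})=\sum_l g^{lj}g_{il}=\delta_{ij}$, which is the orthogonality claim. To see that $(y_j)$ is $\nabla^*$-affine, I would apply the defining relation of the dual connection to $X=\partial_{x_k}$, $Y=\partial_{x_i}$, $Z=\partial_{y_j}$: the left-hand side $\partial_{x_k}g(\partial_{x_i},\partial_{y_j})=\partial_{x_k}\delta_{ij}=0$, while the term $g(\nabla_{\partial_{x_k}}\partial_{x_i},\partial_{y_j})$ vanishes by $\nabla$-affineness, forcing $g(\partial_{x_i},\nabla^*_{\partial_{x_k}}\partial_{y_j})=0$ for every $i$; nondegeneracy of $g$ then gives $\nabla^*_{\partial_{x_k}}\partial_{y_j}=0$, and since the $\partial_{x_k}$ span each tangent space, in particular $\nabla^*_{\partial_{y_l}}\partial_{y_j}=0$, so the Christoffel symbols of $\nabla^*$ vanish in the $y$-coordinates. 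The step I expect to be the main obstacle is the middle one: correctly identifying the symmetry $\partial_{x_i}g_{jk}=\partial_{x_j}g_{ik}$ as the integrability condition and invoking Proposition \ref{levi} to guarantee it, after which the existence of the potential, and hence of $(y_j)$, is automatic; everything downstream is a short computation with the dual-connection identity.
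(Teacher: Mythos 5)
Your proof is correct. The paper itself states this proposition without proof, treating it as a standard preliminary from Amari--Nagaoka (the reference given for the canonical divergence), and your argument is precisely the classical one: flatness of $\nabla$ gives affine coordinates, the Codazzi symmetry $\partial_{x_i}g_{jk}=\partial_{x_j}g_{ik}$ (which you correctly extract from Proposition \ref{levi} via torsion-freeness of both connections) makes $\sum_j g_{jk}\,dx_j$ closed, the Poincar\'e lemma produces the potential coordinates $y_k$, and the duality identity then yields both the orthogonality relation and the vanishing of the $\nabla^*$-Christoffel symbols in the $y$-coordinates. The only caveat worth recording is that the conclusion is local (you shrink $U$ for the Poincar\'e lemma), which is consistent with the paper's own convention that affine coordinates are local coordinate systems.
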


The coordinates $\{(x_i),(y_j)\}$ above are called \emph{dual affine coordinates}. 
Using dual affine coordinates, we can construct the canonical divergence [2, \S 3.4].

Next, we introduce the Fisher metric and $\alpha$-connections. 
Consider a family $\mathcal{S}$ of probability distributions on a finite set $\mathcal{X}$.
Suppose that $\mathcal{S}$ is parameterized by $n$ real-valued variables $[\xi^1,\ldots,\xi^n]$ so that
\begin{equation*}
    \mathcal{S} := \{p_\xi = p(x;\xi)\mid \xi = [\xi^1,\ldots,\xi^n]\in\Xi\},
\end{equation*}
where $\Xi$ is an open subset of $\mathbb{R}^n$.

For $\alpha \in\mathbb{R}, u>0, x\in\mathcal{X}$ and $\xi\in\Xi$, we put
\begin{equation*}
    L^{(\alpha)}(u) := \begin{cases}
        \frac{2}{1-\alpha}u^{\frac{1-\alpha}{2}} & (\alpha\neq 1),\\
        \log u  & (\alpha = 1),
    \end{cases}
\quad
    l^{(\alpha)}(x;\xi) := L^{(\alpha)}(p(x;\xi)).
\end{equation*}
Then, we define the \emph{Fisher metric} $g$ as 
    \begin{equation*}
        g_{ij}(\xi) := \int\partial_il^{(\alpha)}(x;\xi)\partial_jl^{(-\alpha)}(x;\xi)\, dx,
    \end{equation*}
 and \emph{$\alpha$-connections} $\nabla^{(\alpha)}$ as 
    \begin{equation*}
        \Gamma^{(\alpha)}_{ij,k}(\xi) := \int\partial_i\partial_j l^{(\alpha)}(x;\xi)\partial_kl^{(-\alpha)}(x;\xi)\, dx,
    \end{equation*}
where $g(\nabla^{(\alpha)}_{\partial_i}\partial_j,\partial_k) = \Gamma^{(\alpha)}_{ij,k}$.
Note that the Fisher metric does not depend on $\alpha$. We set 
\begin{equation*}
    \tilde{\mathcal{S}} := \{\tau p_\xi \mid \xi \in\Xi , \tau > 0\},
\end{equation*}
and call it the \emph{denormalization} of $\mathcal{S}$.

    In \cite{amari}, the Fisher metric and connections on $\tilde{\mathcal{S}}$ are defined as follows. An extension $\tilde{l}$ of $l$ is defined as
    \begin{equation*}
        \widetilde{l}^{(\alpha)} = \widetilde{l}^{(\alpha)}(x;\xi,\tau) := L^{(\alpha)}(\tau p(x;\xi)).
    \end{equation*}
    Using this $\tilde{l}$, we define the metric and connections on $\widetilde{\mathcal{S}}$ by
    \begin{equation}\label{netric_denormalization}
        \tilde{g}_{ij}(\xi) := \int\partial_i\tilde{l}^{(\alpha)}\partial_j\tilde{l}^{(-\alpha)}\, dx,\quad\tilde{\Gamma}^{(\alpha)}_{ij,k} = \int\partial_i\partial_j \tilde{l}^{(\alpha)}\partial_k\tilde{l}^{(-\alpha)}\, dx.
    \end{equation}
\subsection{Quantum information geometry}
Information geometry of density matrices is called quantum information geometry. 
The set of density matrices $\mathcal{D}$ is defined as
\begin{equation*}
    \mathcal{D} := \{\rho\in\mathbb{P}(n) | \mathrm{Tr}(\rho) = 1\},
\end{equation*}
where $\mathbb{P}(n)$ is the set of $n\times n$ positive definite Hermitian matrices. 
Parameterizing elements of $\mathcal{D}$ as $\rho_\xi$ by $\xi\in\Xi$, the $m$-representation of the natural basis is written as 
\begin{equation*}
    (\partial_i)^{(m)} = \partial_i\rho.
\end{equation*}
The \emph{mixture connection} $\nabla^{(m)}$ is a connection such that
\begin{equation*}
    (\nabla^{(m)}_{\partial_i}\partial_j)^{(m)} = \partial_i\partial_j\rho.
\end{equation*}
We set
\begin{equation*}
    \mathcal{MON} := \left\{f:\mathbb{R}_{>0}\rightarrow \mathbb{R}_{>0} | f \mbox{ is operator monotone}, \,f(1) = 1, f(t) = tf\left(\frac{1}{t}\right)\right\}.
\end{equation*} 
The \emph{monotone metric} for $f\in\mathcal{MON}$ is expressed as 
\begin{equation*}
    g^f_\rho(X,Y) = \mbox{Tr}\left\{X^*\frac{1}{(2\pi i)^2}\oint\oint c(\xi,\eta) \frac{1}{\xi-\rho} Y \frac{1}{\eta-\rho} \, d\xi d\eta \right\},
\end{equation*}
where $c(x,y) = 1/(yf(x/y))$ and $\xi(t), \eta(t)$ are paths surrounding the positive spectrum of $\rho$. The monotone metric for $f(x) = (x-1)/\log x$ is called the Bogoliubov-Kubo-Mori (BKM) metric.  We refer to \cite{amari} and \cite{dit} for further reading.
It is known that the BKM metric enjoys the following remarkable property.
\begin{proposition}
    $(\mathcal{D},\rm{BKM})$ equipped with the mixture connection is dually flat.
\end{proposition}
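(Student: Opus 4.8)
The plan is to exhibit explicit affine coordinate systems for the mixture connection $\nabla^{(m)}$ and for its $g^{\mathrm{BKM}}$-dual, thereby showing that both connections are torsion-free and curvature-free. The starting point is to simplify the contour-integral expression of the BKM metric. For $f(x)=(x-1)/\log x$ one computes the kernel
\begin{equation*}
    c(x,y) = \frac{1}{yf(x/y)} = \frac{\log x-\log y}{x-y},
\end{equation*}
which is precisely the divided difference of the logarithm. Substituting this into the double contour integral and evaluating by Cauchy's formula in the spectral decomposition of $\rho$ identifies $g^{\mathrm{BKM}}$ with the pairing through the Fréchet derivative $(D\log)_\rho$ of the matrix logarithm, so that
\begin{equation*}
    g^{\mathrm{BKM}}_\rho(\partial_i,\partial_j) = \mathrm{Tr}\left[\partial_i\rho\cdot\partial_j\log\rho\right].
\end{equation*}
I regard this as the key identity, and recognizing the BKM kernel as the divided difference of $\log$ is the step I expect to be the main obstacle; everything afterwards mirrors the classical dually flat structure of exponential families.

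With this pairing in hand, I would read off the two representations: the $m$-representation $(\partial_i)^{(m)}=\partial_i\rho$ and the $e$-representation $(\partial_i)^{(e)}=\partial_i\log\rho$, both of which are automatically centered, since $\mathrm{Tr}(\partial_i\rho)=0$ and $\mathrm{Tr}(\rho\,\partial_i\log\rho)=\mathrm{Tr}(\partial_i\rho)=0$ (the latter from $\int_0^\infty(\rho+s)^{-1}\rho(\rho+s)^{-1}\,ds=I$). Differentiating $g^{\mathrm{BKM}}_\rho(\partial_j,\partial_k)=\mathrm{Tr}[\partial_j\rho\cdot\partial_k\log\rho]$ by the Leibniz rule gives
\begin{equation*}
    \partial_i\, g^{\mathrm{BKM}}_\rho(\partial_j,\partial_k) = \mathrm{Tr}[\partial_i\partial_j\rho\cdot\partial_k\log\rho] + \mathrm{Tr}[\partial_j\rho\cdot\partial_i\partial_k\log\rho].
\end{equation*}
The first term equals $g^{\mathrm{BKM}}(\nabla^{(m)}_{\partial_i}\partial_j,\partial_k)$ by the defining property $(\nabla^{(m)}_{\partial_i}\partial_j)^{(m)}=\partial_i\partial_j\rho$, so the dual connection $\nabla^{(e)}$ is characterized by $(\nabla^{(e)}_{\partial_i}\partial_k)^{(e)}\equiv\partial_i\partial_k\log\rho$ modulo multiples of the identity, which pair trivially with the traceless $\partial_j\rho$.

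To prove flatness of $\nabla^{(m)}$, I would use the affine parameterization of $\mathcal{D}$: fixing a basis $F_1,\dots,F_n$ of traceless Hermitian matrices and writing $\rho=\rho_0+\sum_i\eta^i F_i$, one has $\partial_i\partial_j\rho=0$, hence $(\nabla^{(m)}_{\partial_i}\partial_j)^{(m)}=0$, so the coordinates $(\eta^i)$ are $\nabla^{(m)}$-affine and $\nabla^{(m)}$ has vanishing torsion and curvature. To prove flatness of $\nabla^{(e)}$, I would use the exponential parameterization $\rho=\exp\bigl(\sum_i\theta^i F_i-\psi(\theta)\bigr)$, for which $\log\rho=\sum_i\theta^i F_i-\psi(\theta)I$ and therefore $\partial_i\partial_k\log\rho=-(\partial_i\partial_k\psi)\,I$ is a multiple of the identity. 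By the characterization above, $(\nabla^{(e)}_{\partial_i}\partial_k)^{(e)}\equiv 0$, so the coordinates $(\theta^i)$ are $\nabla^{(e)}$-affine and $\nabla^{(e)}$ is likewise torsion-free and flat.

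Finally I would assemble the conclusion: since $\nabla^{(m)}$ and $\nabla^{(e)}$ are both torsion-free with vanishing curvature and are mutually dual with respect to $g^{\mathrm{BKM}}$, the quadruple $(\mathcal{D},g^{\mathrm{BKM}},\nabla^{(m)},\nabla^{(e)})$ is dually flat by definition. As a consistency check, Proposition \ref{curvature_prop} already guarantees that the curvature of $\nabla^{(e)}$ vanishes once that of $\nabla^{(m)}$ does, while Proposition \ref{levi} recovers the torsion-freeness of $\nabla^{(e)}$ from that of $\nabla^{(m)}$ together with the symmetry of $\nabla^{(m)}g$; the explicit $\theta$-coordinates render both facts manifest.
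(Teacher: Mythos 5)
Your argument is correct, and it is essentially the proof the paper relies on: the paper does not prove this proposition itself but cites [2, Theorem 7.1], whose standard argument is exactly what you reconstruct — identifying the BKM kernel as the divided difference of $\log$ so that $g^{\mathrm{BKM}}_\rho(\partial_i,\partial_j)=\mathrm{Tr}[\partial_i\rho\,\partial_j\log\rho]$, and then exhibiting the mixture coordinates as $\nabla^{(m)}$-affine and the exponential coordinates as affine for the dual connection. No gaps; your handling of the identity-matrix ambiguity in the $e$-representation is the right way to make the duality computation rigorous.
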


We can find a proof of this proposition in [2, Theorem 7.1], and the proof does not use the condition that the matrices considered have trace 1. Thus
we can prove the proposition below in completely the same way.

\begin{proposition}
    $(\mathbb{P}(n),\rm{BKM})$ equipped with the mixture connection is dually flat.
\end{proposition}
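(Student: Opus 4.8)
The plan is to produce two global coordinate systems on $\mathbb{P}(n)$ that are mutually dual affine coordinates for the BKM metric, thereby making both $\nabla^{(m)}$ and its dual flat, and to observe that no step uses $\mathrm{Tr}(\rho)=1$. First I would view $\mathbb{P}(n)$ as an open convex cone inside the real vector space $\mathcal{H}(n)$ of $n\times n$ Hermitian matrices. Fix a basis $H_1,\dots,H_{n^2}$ of $\mathcal{H}(n)$ that is orthonormal for the trace form $\langle A,B\rangle=\mathrm{Tr}(AB)$ and set $\xi=(\xi^i)$ by $\rho=\sum_i\xi^iH_i$. Then $(\partial_{\xi^i})^{(m)}=\partial_{\xi^i}\rho=H_i$ and $\partial_{\xi^i}\partial_{\xi^j}\rho=0$, so the defining relation $(\nabla^{(m)}_{\partial_{\xi^i}}\partial_{\xi^j})^{(m)}=\partial_{\xi^i}\partial_{\xi^j}\rho$ gives $\nabla^{(m)}_{\partial_{\xi^i}}\partial_{\xi^j}=0$. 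Hence $(\xi^i)$ are $\nabla^{(m)}$-affine and $\nabla^{(m)}$ is torsion-free with vanishing curvature. This uses only that $\mathbb{P}(n)$ is open in $\mathcal{H}(n)$; the trace constraint in the density-matrix case merely lowers the dimension from $n^2$ to $n^2-1$ and plays no other role.

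Next I would construct the dual coordinates. Since $\rho\mapsto\log\rho$ is a diffeomorphism of $\mathbb{P}(n)$ onto $\mathcal{H}(n)$, I can define $\theta=(\theta^j)$ by $\log\rho=\sum_j\theta^jH_j$. The crucial computation is to rewrite the BKM metric, for Hermitian $A,B$ in the $m$-representation, as
\[
    g^{\mathrm{BKM}}(A,B)=\mathrm{Tr}\bigl(A\,D\log_\rho[B]\bigr),\qquad D\log_\rho[B]=\int_0^\infty(\rho+s)^{-1}B(\rho+s)^{-1}\,ds,
\]
where $D\log_\rho$ is the Fr\'echet derivative of the matrix logarithm. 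This follows from the contour-integral definition with $c(x,y)=1/(yf(x/y))$ for $f(x)=(x-1)/\log x$ together with the scalar identity $\frac{\log x-\log y}{x-y}=\int_0^\infty\frac{ds}{(x+s)(y+s)}$. Granting it, the chain rule gives $D\log_\rho[\partial_{\theta^j}\rho]=\partial_{\theta^j}\log\rho=H_j$, whence $g(\partial_{\xi^i},\partial_{\theta^j})=\mathrm{Tr}(H_iH_j)=\delta_{ij}$.

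Finally, from a $\nabla^{(m)}$-affine system $(\xi^i)$ together with coordinates $(\theta^j)$ satisfying $g(\partial_{\xi^i},\partial_{\theta^j})=\delta_{ij}$, a short computation shows $(\theta^j)$ is $\nabla^{(m)*}$-affine: differentiating the constant $g(\partial_{\xi^k},\partial_{\theta^j})$ and using $\nabla^{(m)}_{\partial_{\xi^i}}\partial_{\xi^k}=0$ in the definition of the dual connection forces $\nabla^{(m)*}_{\partial_{\xi^i}}\partial_{\theta^j}=0$, and expanding $\partial_{\theta^i}$ in the $\partial_{\xi}$-basis gives $\nabla^{(m)*}_{\partial_{\theta^i}}\partial_{\theta^j}=0$. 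Thus $\nabla^{(m)*}$ is also torsion-free with vanishing curvature, and $(\mathbb{P}(n),\mathrm{BKM},\nabla^{(m)},\nabla^{(m)*})$ is dually flat; alternatively one may invoke Proposition \ref{curvature_prop} for the dual curvature and Proposition \ref{levi} for the dual torsion. I expect the main obstacle to be the middle step --- deriving the logarithmic-derivative representation of BKM from the contour-integral definition --- since after that the whole structure is dictated by the two linear coordinate systems and the irrelevance of the trace constraint is manifest.
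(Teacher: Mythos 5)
Your proposal is correct, and it is in substance the argument that the paper leans on without writing it out: the paper's entire ``proof'' of this proposition is the remark that the proof of [2, Theorem~7.1] (dual flatness of $(\mathcal{D},\mathrm{BKM})$ with the mixture connection) nowhere uses $\mathrm{Tr}(\rho)=1$ and therefore applies verbatim to $\mathbb{P}(n)$. What you have done is reconstruct that underlying proof explicitly. Your linear coordinates $\xi$ of $\rho$ are the $m$-affine coordinates, your coordinates $\theta$ of $\log\rho$ are the $e$-affine coordinates, and the key identity $g^{\mathrm{BKM}}_\rho(A,B)=\mathrm{Tr}\bigl(A\,D\log_\rho[B]\bigr)$ does follow from the paper's contour-integral definition, since for $f(x)=(x-1)/\log x$ one has $c(x,y)=(\log x-\log y)/(x-y)=\int_0^\infty (x+s)^{-1}(y+s)^{-1}\,ds$, and functional calculus turns the double contour integral into $\int_0^\infty(\rho+s)^{-1}B(\rho+s)^{-1}\,ds$, the Fr\'echet derivative of $\log$ at $\rho$; the chain rule then gives the biorthogonality $g(\partial_{\xi^i},\partial_{\theta^j})=\mathrm{Tr}(H_iH_j)=\delta_{ij}$, and your final step correctly converts this, together with $\nabla^{(m)}$-affineness of $(\xi^i)$, into $\nabla^{(m)*}$-affineness of $(\theta^j)$, hence vanishing torsion and curvature for both connections. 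Your closing observation --- that the trace constraint only lowers the dimension of the ambient affine subspace and enters nowhere else --- is exactly the point the paper makes when passing from $\mathcal{D}$ to $\mathbb{P}(n)$. The trade-off is that your version is self-contained but must verify the logarithmic-derivative representation of the BKM metric, whereas the paper discharges the whole statement by citation.
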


\subsection{Takano Gaussian space}\label{takano_gauss}
In this subsection, we explain some results from \cite{takano}. We consider multivariate Gaussian distributions
\begin{equation*}
    p(x;\xi) = \frac{1}{(\sqrt{2\pi}\sigma)^n}\prod_{i=1}^n \exp\left\{-\frac{(x_i-m_i)^2}{2\sigma^2}\right\},
\end{equation*}
where $\xi = (\sigma,m_1,\ldots , m_n)\in L^{(n+1)}, \,L^{(n+1)} := \mathbb{R}_{>0}\times \mathbb{R}^n$.

By a straightforward calculation, we obtain the Fisher metric $G$ as
\begin{equation*}
    G_{\sigma\sigma} = \frac{2n}{\sigma^2},\quad G_{\sigma i} = G_{i \sigma} = 0,\quad G_{ij} = \frac{1}{\sigma^2}\delta_{ij},
\end{equation*}
where $\partial_\sigma = \partial/\partial\sigma$ and $\partial_i = \partial/\partial m_i$, i.e., 
\begin{equation*}
    ds^2 = \frac{1}{\sigma^2}(2nd\sigma^2 + dm_1^2 + \cdots + dm_n^2).
\end{equation*}
%これはいるのかわからんのよな、ちなみにコピペです
Its $\alpha$-connections are
\begin{equation*}
    \Gamma^{(\alpha)}_{ij,k} = 0, \quad \Gamma_{ij,\sigma}^{(\alpha)} = \frac{1-\alpha}{\sigma^3}\delta_{ij},\quad \Gamma^{(\alpha)}_{i\sigma,k} = -\frac{1 + \alpha}{\sigma^3}\delta_{ik},
\end{equation*}
\begin{equation*}
    \Gamma^{(\alpha)}_{i\sigma,\sigma} = 0,\quad \Gamma^{(\alpha)}_{\sigma\sigma,i} = 0,\quad \Gamma^{(\alpha)}_{\sigma\sigma,\sigma} = -(1 + 2\alpha)\frac{2n}{\sigma^3},
\end{equation*}
and 
\begin{equation*}
    \nabla^{(\alpha)}_{\partial_i}\partial_j = \frac{1-\alpha}{2n\sigma}\delta_{ij}\partial_\sigma,\quad \nabla^{(\alpha)}_{\partial_i}\partial_\sigma = \nabla^{(\alpha)}_{\partial_\sigma}\partial_i = -\frac{1 + \alpha}{\sigma}\partial_i,\quad \nabla^{(\alpha)}_{\partial_\sigma}\partial_\sigma = -\frac{1 + 2\alpha}{\sigma}\partial_\sigma.
\end{equation*}

In \cite{takano}, they call \emph{$\alpha$-flat} if the curvature tensor with respect to the $\alpha$-connection vanishes identically and the following fact is proved.
\begin{proposition}
    $(L^{(n + 1)},ds^2,\nabla^{(\alpha)})$ is a space of constant curvature $-\frac{(1-\alpha)(1 + \alpha)}{2n}$. In particular,
    $(L^{(n + 1)},ds^2)$ is $(\pm{1})$-flat.
\end{proposition}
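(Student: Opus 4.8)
The plan is to verify directly the defining relation of a space of constant curvature, namely $R(X,Y)Z = k\{g(Y,Z)X - g(X,Z)Y\}$, with the candidate value $k = -\frac{(1-\alpha)(1+\alpha)}{2n}$, using the explicit connection coefficients of $\nabla^{(\alpha)}$ recorded above. Working in the coordinate frame $\{\partial_\sigma,\partial_1,\dots,\partial_n\}$ all Lie brackets vanish, so $R(\partial_a,\partial_b)\partial_c = \nabla^{(\alpha)}_{\partial_a}\nabla^{(\alpha)}_{\partial_b}\partial_c - \nabla^{(\alpha)}_{\partial_b}\nabla^{(\alpha)}_{\partial_a}\partial_c$. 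Since $R$ is antisymmetric in its first two arguments and any configuration with $\partial_\sigma$ in both of the first two slots vanishes, I would reduce the verification to the four independent types $R(\partial_i,\partial_j)\partial_k$, $R(\partial_i,\partial_j)\partial_\sigma$, $R(\partial_i,\partial_\sigma)\partial_j$, and $R(\partial_i,\partial_\sigma)\partial_\sigma$, comparing each with the right-hand side $k\{g(\partial_b,\partial_c)\partial_a - g(\partial_a,\partial_c)\partial_b\}$, where the only nonzero metric components are $g(\partial_\sigma,\partial_\sigma)=2n/\sigma^2$ and $g(\partial_i,\partial_j)=\delta_{ij}/\sigma^2$.

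As a representative case, for the purely spatial type, substituting $\nabla^{(\alpha)}_{\partial_j}\partial_k = \frac{1-\alpha}{2n\sigma}\delta_{jk}\partial_\sigma$ and then $\nabla^{(\alpha)}_{\partial_i}\partial_\sigma = -\frac{1+\alpha}{\sigma}\partial_i$ gives $\nabla^{(\alpha)}_{\partial_i}\nabla^{(\alpha)}_{\partial_j}\partial_k = -\frac{(1-\alpha)(1+\alpha)}{2n\sigma^2}\delta_{jk}\partial_i$, and antisymmetrizing yields
\[
R(\partial_i,\partial_j)\partial_k = -\frac{(1-\alpha)(1+\alpha)}{2n\sigma^2}\left(\delta_{jk}\partial_i - \delta_{ik}\partial_j\right),
\]
which equals $k\{g(\partial_j,\partial_k)\partial_i - g(\partial_i,\partial_k)\partial_j\}$ exactly when $k = -\frac{(1-\alpha)(1+\alpha)}{2n}$. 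The type $R(\partial_i,\partial_j)\partial_\sigma$ is symmetric in $i,j$ and hence vanishes, matching $g(\partial_i,\partial_\sigma)=g(\partial_j,\partial_\sigma)=0$ on the right. The remaining two types combine a $\sigma$-derivative of the factor $1/\sigma$ with the coefficient $-\frac{1+2\alpha}{\sigma}$; I expect a short computation to give $R(\partial_i,\partial_\sigma)\partial_\sigma = -\frac{(1-\alpha)(1+\alpha)}{\sigma^2}\partial_i$, matching $k\,g(\partial_\sigma,\partial_\sigma)\partial_i = k\cdot\frac{2n}{\sigma^2}\partial_i$, and likewise $R(\partial_i,\partial_\sigma)\partial_j = \frac{(1-\alpha)(1+\alpha)}{2n\sigma^2}\delta_{ij}\partial_\sigma$, matching $-k\,g(\partial_i,\partial_j)\partial_\sigma$.

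The only real work is bookkeeping: one must check that all four configurations return the \emph{same} scalar $k$, so that the tensor identity holds globally rather than slot-by-slot. The nontrivial cancellations hinge on the algebraic identities $1+(1+2\alpha)=2(1+\alpha)$ and $(1+2\alpha)-(2+\alpha)=-(1-\alpha)$, which arise precisely when the derivative of $1/\sigma$ meets the $-\frac{1+2\alpha}{\sigma}$ term and which force the factor $(1-\alpha)(1+\alpha)$ to appear uniformly; this uniformity is the point I would check most carefully. A direct computation is preferable to invoking the known hyperbolic structure at $\alpha=0$ precisely because it treats every $\alpha$ at once using only the data already provided. Once constancy of $k$ is established, the \emph{in particular} statement is immediate: setting $\alpha = \pm 1$ makes $(1-\alpha)(1+\alpha)=1-\alpha^2=0$, so $k=0$ and $R\equiv 0$, i.e.\ $(L^{(n+1)},ds^2)$ is $(\pm 1)$-flat.
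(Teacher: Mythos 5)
Your computation is correct: I checked all four curvature types against the listed connection coefficients and metric, and each one returns the same scalar $k=-\frac{(1-\alpha)(1+\alpha)}{2n}$ (in particular your identities $1+(1+2\alpha)=2(1+\alpha)$ and $(1+2\alpha)-(2+\alpha)=-(1-\alpha)$ are exactly where the uniform factor $(1-\alpha)(1+\alpha)$ emerges in the mixed cases). Note, however, that the paper itself gives no proof of this proposition --- it is quoted from Takano's paper \cite{takano}, where it is established by essentially this same direct coordinate calculation --- so your argument is a correct, self-contained verification of a cited result rather than an alternative to an argument in the text.
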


For simplicity, we call $(L^{(n + 1)},ds^2)$ the \emph{Takano Gaussian space} in this paper.

\section{Warped products}\label{cone}
In this section, we calculate dual affine connections on warped products.
\subsection{Koszul formula}
Let $\nabla$, $\nabla^*$ be torsion free dual affine connections on $(M,g)$.
For $X,Y,Z,W \in \mathfrak{X}(M)$, let us first see a kind of Koszul formula for $\nabla$. Summing up 
\begin{eqnarray*}
    Xg(Y,Z) &=& g(\nabla_XY,Z) + g(Y,\nabla^*_XZ)\nonumber,\\
    Yg(X,Z) &=& g(\nabla_YX,Z) + g(X,\nabla^*_YZ)\nonumber,\\
    -Zg(X,Y) &=& -g(\nabla_ZX,Y) - g(X,\nabla^*_ZY),
\end{eqnarray*}
we get
\begin{eqnarray*}
    Xg(Y,Z) + Yg(X,Z) - Zg(X,Y) &=& g(\nabla_XY,Z) + g(\nabla_YX,Z) + g(Y,\nabla^*_XZ - \nabla_ZX) + g(X,\nabla^*_YZ - \nabla^*_ZY).
\end{eqnarray*}
Recalling that we consider torsion free affine connections, we have
\begin{equation}\label{star}
    2g(\nabla_XY,Z) = Xg(Y,Z) + Yg(X,Z) - Zg(X,Y) + g([X,Y],Z) - g(Y,\nabla^*_XZ-\nabla_ZX) - g(X,[Y,Z]).
\end{equation}
In order to have a further look on $(\nabla^*_XZ-\nabla_ZX)$, we put
\begin{equation*}
    a(X,W) := \nabla^*_XW - \nabla_WX,
\end{equation*}
and calculate 
\begin{eqnarray*}
    a(X,W) - a(W,X) &=& (\nabla^*_XW-\nabla_WX) - (\nabla^*_WX - \nabla_XW) = 2[X,W],\nonumber\\
    a(X,W) + a(W,X) &=& (\nabla^*_XW - \nabla_XW) + (\nabla^*_WX - \nabla_WX) = -2\left(P_XW + P_WX\right),
\end{eqnarray*}
where we put
\begin{equation*}
    P := \frac{\nabla-\nabla^*}{2}.
\end{equation*}
Let us collect some properties of $P$.
\begin{lemma}
    Let $f$ be an arbitrary $C^\infty$ function on $M$. For any $X,Y,Z\in\mathfrak{X}(M)$, we have the following equations:
    \begin{equation}
        \label{aa}
        P_XY = P_YX,
    \end{equation}
    \begin{equation}
        \label{bb}
        g(P_XY,Z) = g(Y,P_XZ),
    \end{equation}
    \begin{equation}
        \label{cc}
        P_{fX} Y = fP_XY,\quad P_XfY = fP_XY.
    \end{equation}
    \begin{proof}
    For (\ref{aa}),
        \begin{equation*}
            P_XY - P_YX = \frac{\nabla_XY-\nabla_YX}{2} - \frac{\nabla^*_XY - \nabla^*_YX}{2} = \frac{1}{2}([X,Y]-[X,Y]) = 0.
        \end{equation*}
    For (\ref{bb}),
    \begin{eqnarray*}
        g\left(\frac{\nabla_X-\nabla^*_X}{2}Y,Z\right) &=& \frac{1}{2}\left(g(\nabla_XY,Z)-g(\nabla^*_XY,Z)\right)\nonumber\\
        &=& \frac{1}{2}\left(Xg(Y,Z)-g(Y,\nabla^*_XZ)\right)-\frac{1}{2}(Xg(Y,Z)-g(Y,\nabla_XZ))\nonumber\\
        &=& \frac{1}{2}\left(g(Y,\nabla_XZ)-g(Y,\nabla^*_XZ)\right)\nonumber\\
        &=& g(Y,P_XZ).
    \end{eqnarray*}
    For (\ref{cc}), the first equation is clear and we also observe
    \begin{equation*}
        P_X(fY) = \frac{\nabla_X(fY) - \nabla^*_X(fY)}{2} = Xf\frac{Y - Y}{2} + f\frac{\nabla_XY - \nabla^*_XY}{2} = fP_XY.
    \end{equation*}
\end{proof}
\end{lemma}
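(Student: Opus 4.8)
The plan is to regard $P=\frac{\nabla-\nabla^*}{2}$ as the difference tensor of the two connections and to verify the three identities \eqref{aa}, \eqref{bb} and \eqref{cc} one at a time. Each reduces to a short algebraic manipulation built from only three ingredients: the defining relation of the dual connection, the torsion-freeness of $\nabla$ and $\nabla^*$, and the standard axioms (additivity, function-linearity in the lower slot, and the Leibniz rule) shared by every affine connection. No analytic input is required.

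For the symmetry \eqref{aa} I would expand $P_XY-P_YX=\frac{1}{2}((\nabla_XY-\nabla_YX)-(\nabla^*_XY-\nabla^*_YX))$; since both connections are torsion-free, each inner difference equals $[X,Y]$, and the two copies cancel. This is the only place where torsion-freeness is used. The tensoriality \eqref{cc} splits into two halves. The identity $P_{fX}Y=fP_XY$ is immediate, because $\nabla$ and $\nabla^*$ are both $C^\infty(M)$-linear in the subscript argument. For $P_X(fY)=fP_XY$ I would apply the Leibniz rule to $\nabla_X(fY)$ and to $\nabla^*_X(fY)$ separately; the first-order terms $(Xf)Y$ produced are identical for the two connections and therefore cancel in the difference, leaving only $f$ multiplying $P_XY$.

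The substantive step is the self-adjointness \eqref{bb}. I would begin from $2g(P_XY,Z)=g(\nabla_XY,Z)-g(\nabla^*_XY,Z)$ and rewrite each term by duality. The defining relation gives $g(\nabla_XY,Z)=Xg(Y,Z)-g(Y,\nabla^*_XZ)$, and reading the same relation with $\nabla$ and $\nabla^*$ interchanged — which is legitimate because the symmetry of $g$ turns the defining equation into $Xg(Y,Z)=g(\nabla^*_XY,Z)+g(Y,\nabla_XZ)$ — gives $g(\nabla^*_XY,Z)=Xg(Y,Z)-g(Y,\nabla_XZ)$. Subtracting, the common term $Xg(Y,Z)$ drops out and what remains is $2g(P_XY,Z)=g(Y,\nabla_XZ-\nabla^*_XZ)=2g(Y,P_XZ)$, which is \eqref{bb}.

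The only point needing a moment's thought is this reflexivity of the duality used for \eqref{bb}: one must notice that, thanks to the symmetry of $g$, the single defining relation can be applied in both orientations, so that the two rewritings share the same $Xg(Y,Z)$ and it cancels. Everything else is bookkeeping, so I anticipate no real obstacle.
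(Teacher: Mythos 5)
Your proposal is correct and follows essentially the same route as the paper: torsion-freeness cancels the brackets for \eqref{aa}, the duality relation read in both orientations (using the symmetry of $g$) cancels the $Xg(Y,Z)$ terms for \eqref{bb}, and tensoriality in the lower slot plus cancellation of the Leibniz terms gives \eqref{cc}. No gaps.
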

By the above lemma, we can express $a$ using $P$ as
\begin{equation*}
    a(X,W) + a(W,X) = -2(P_XW + P_WX) = -4P_XW,\quad a(X,W) = [X,W] - 2P_XW.
\end{equation*}
Substituting this into (\ref{star}), we obtain the following Koszul formula: 
\begin{equation}
    2g(\nabla_XY,Z) = Xg(Y,Z) + Yg(X,Z) - Zg(X,Y) + g([X,Y],Z) - g(Y,[X,Z] - 2P_XZ) - g(X,[Y,Z]).\label{starstar}
\end{equation}

\subsection{O'Neill formulas for affine connections on warped products} 
Let $(B,g_B),(F,g_F)$ be Riemannian manifolds, $f$ be a positive $C^{\infty}$-function on $B$,
$M := B\times_f F$ be the warped product of them equipped with the metric $G := g_B + f^2 g_F$, and $D,D^*$ be torsion free dual affine connections on $M$.
Denote by $\mathcal{L}(F)$, $\mathcal{L}(B)$ the sets of lifts of vector fields on $F$ to $M$, $B$ to $M$, respectively.
Let $X,Y,Z\in\mathcal{L}(B)$, $U,V,W\in\mathcal{L}(F)$ in the sequel.
We will assume the following.
\begin{assume}\label{assume1}
    $D_XY\in\mathcal{L}(B)$ for all $X,Y \in\mathcal{L}(B)$.
\end{assume}
\begin{lemma}\label{3_2}
    Under Assumption \ref{assume1}, we have
    \begin{equation*}
        G(D^*_XY,V) = 0,\quad G(P_XV,Y) = 0.
    \end{equation*}
\end{lemma}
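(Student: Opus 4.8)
The plan is to prove both identities from the single relation $G(D_X V, Y) = 0$, using only the defining equation of the dual connection, the torsion-freeness of $D$ and $D^*$, Assumption \ref{assume1}, and the elementary structure of the warped product. The warped-product facts I will invoke are: (i) base lifts and fiber lifts are $G$-orthogonal, $G(X,V) = 0$, because $G = g_B + f^2 g_F$; (ii) $[X,V] = 0$ for $X \in \mathcal{L}(B)$ and $V \in \mathcal{L}(F)$; and (iii) $V\bigl(G(X,Y)\bigr) = 0$, since $G(X,Y) = g_B(X,Y)$ is the pullback of a function on $B$ and hence is constant along the fibers. From (ii) together with torsion-freeness I also record $D_X V = D_V X$ and $D^*_X V = D^*_V X$.

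First I would feed the orthogonality (i) into the defining relation $X\,G(Y,V) = G(D_X Y, V) + G(Y, D^*_X V)$. Its left-hand side vanishes by (i), and $G(D_X Y, V) = 0$ by Assumption \ref{assume1}, since $D_X Y \in \mathcal{L}(B)$ is orthogonal to $V$. This yields the key auxiliary identity
\begin{equation*}
    G(D^*_X V, Y) = 0 \qquad \text{for all } X,Y \in \mathcal{L}(B),\ V \in \mathcal{L}(F).
\end{equation*}
Next I would apply the defining relation in the fiber direction, $V\,G(X,Y) = G(D_V X, Y) + G(X, D^*_V Y)$. Here the left-hand side vanishes by (iii), and rewriting $D_V X = D_X V$ and $D^*_V Y = D^*_Y V$ via torsion-freeness turns this into $G(D_X V, Y) + G(X, D^*_Y V) = 0$. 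The second summand is an instance of the auxiliary identity above, taken with the two base arguments interchanged, so it vanishes and leaves
\begin{equation*}
    G(D_X V, Y) = 0.
\end{equation*}

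With both $G(D_X V, Y) = 0$ and $G(D^*_X V, Y) = 0$ in hand, the two claims are immediate. For the first I would apply the defining relation once more, $X\,G(V,Y) = G(D_X V, Y) + G(V, D^*_X Y)$; the left side vanishes by (i) and the first term on the right vanishes by the previous display, giving $G(D^*_X Y, V) = 0$. For the second, expanding $P = (D - D^*)/2$ gives $G(P_X V, Y) = \tfrac12\bigl(G(D_X V, Y) - G(D^*_X V, Y)\bigr) = 0$.

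The only genuine subtlety is the justification of the structural facts (ii) and (iii): that the bracket of a base lift and a fiber lift vanishes and that a fiber lift annihilates functions pulled back from $B$. These are standard for warped products, but they are exactly what allows torsion-freeness to convert $D_V X$ into $D_X V$, and the argument hinges on combining that conversion with the auxiliary identity read off for the interchanged base arguments. As a cross-check, Proposition \ref{levi} makes $(D + D^*)/2$ the Levi-Civita connection $\bar\nabla$ of $G$, for which $\bar\nabla_X Y \in \mathcal{L}(B)$ by the classical O'Neill formula; Assumption \ref{assume1} then forces $P_X Y = D_X Y - \bar\nabla_X Y \in \mathcal{L}(B)$, whence $D^*_X Y = \bar\nabla_X Y - P_X Y \in \mathcal{L}(B)$ is orthogonal to $V$, recovering both identities.
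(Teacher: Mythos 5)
Your proof is correct, but it takes a different route from the paper's. For the first identity the paper argues structurally: by Proposition \ref{levi} the average $(D+D^*)/2$ is the Levi-Civita connection of the warped metric, which preserves $\mathcal{L}(B)$ by the classical O'Neill formula, so Assumption \ref{assume1} forces $D^*_XY = 2\bigl(\tfrac{D+D^*}{2}\bigr)_XY - D_XY$ to be horizontal and hence orthogonal to $V$ --- exactly your closing ``cross-check,'' which the paper uses as the actual proof. For the second identity the paper substitutes $X,Y,V$ into the affine Koszul formula (\ref{starstar}) it has just derived, so that $2G(D_XY,V)$ collapses to $G(Y,2P_XV)$, and then invokes $G(D_XY,V)=0$. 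You instead work directly from the defining duality relation: you first extract the auxiliary identity $G(D^*_XV,Y)=0$, then use $[X,V]=0$ and torsion-freeness to convert $D_VX$ into $D_XV$ and obtain $G(D_XV,Y)=0$, and finally subtract to get $G(P_XV,Y)=0$. Your argument is self-contained and does not need the Koszul machinery of Subsection 3.1 (which is, after all, itself distilled from the same duality-plus-torsion-freeness inputs), at the cost of an extra bookkeeping step with the interchanged base arguments; the paper's version is shorter once formula (\ref{starstar}) is available and reuses it throughout Section 3. Both arguments rest on the same structural facts (i)--(iii) about lifts, which you rightly flag as the only points needing justification.
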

    \begin{proof}
        The first equation follows from Assumption \ref{assume1} and the fact that $(D + D^*)/2$ is the Levi-Civita connection (recall Proposition \ref{levi}). 
        To see the second equation, since $G(Y,V) = G(X,V) = 0$ and $[X,V] = [Y,V] = 0$, we have
    \begin{eqnarray*}
        2G(D_XY,V) &=& XG(Y,V) + YG(X,V) - VG(X,Y) + G([X,Y],V) - G(Y,[X,V]-2P_XV) - G(X,[Y,V])\nonumber\\
        &=& -VG(X,Y) + G([X,Y],V) + G(Y,2P_XV)\nonumber\\
        &=& G(Y,2P_XV).
    \end{eqnarray*}
By combining this with $2G(D_XY,V) = 0$ by Assumption \ref{assume1}, the second equation holds. 
    \end{proof}
Let us modify some formulas on warped products in \cite{oneil} for the Levi-Civita connections to those for affine connections.

First we express $D_VX$.
On the one hand, $G(D_XV,Y)= 0$ by the Koszul formula (\ref{starstar}) and Lemma \ref{3_2}. On the other hand, it follows from (\ref{bb}) and (\ref{starstar}) that 
\begin{eqnarray*}
    2G(D_XV,W) &=& XG(V,W) + VG(X,W) - WG(X,V) + G([X,V],W) - G(V,[X,W]-2P_XW) - G(X,[V,W])\nonumber\\
    &=& XG(V,W) + 2G(V,P_XW)\nonumber\\
    &=& XG(V,W) + 2G(P_XV,W).
\end{eqnarray*}
Since
\begin{equation*}
    XG(V,W) = 2\frac{Xf}{f}G(V,W)
\end{equation*}
by the definition of $G$ and $D$ is torsion free, we obtain
\begin{equation}\label{alpha}
    D_VX =  D_XV =\frac{Xf}{f}V + P_XV.
\end{equation}

Next we consider $D_VW$. Observe that
\begin{equation*}\label{gamma}
    G(D_VW,X) = -G(W,D^*_VX) = -G\left(W,\frac{Xf}{f}V-P_XV\right).
\end{equation*}
Using $Xf = G(\mbox{grad }f,X)$, (\ref{aa}) and (\ref{bb}), we have
\begin{equation*}
    G(D_VW,X) = G\left(-\frac{G(V,W)}{f}\mbox{grad }f + P_VW,X\right).
\end{equation*}
Thus we obtain 
\begin{equation}\label{beta}
    \mbox{Hor }D_VW = -\frac{G(V,W)}{f}\mbox{grad }f + \mbox{Hor }P_VW,
\end{equation}
where $\mbox{Hor}$ denotes the projection to $TB$.
\begin{remark}
    As another way to reach these formulas, we can use the fact that $(D + D^*)/2$ is the Levi-Civita connection and formulas in \cite{oneil}.
    For example,
    \begin{equation*}
        \left(\frac{D + D^*}{2}\right)_XV = \frac{Xf}{f}V 
    \end{equation*}
    implies
    \begin{equation*}
        D_XV = \frac{Xf}{f}V + \frac{D_XV - D^*_XV}{2} = \frac{Xf}{f}V + P_XV.
    \end{equation*}
\end{remark}
\section{Cones}
In this section, we specialize our study of warped products to cones.
We fix our framework and assumptions (including Assumption \ref{assume1}).
\begin{assume}\label{assume2}
    Let $B = \mathbb{R}_{>0}$ with the Euclidean metric $g_B$ such that $g_B(\frac{\partial}{\partial t},\frac{\partial}{\partial t}) = 1$, $f(t) = t$ and $(\widetilde{\nabla},\widetilde{\nabla}^*)$ be dually flat affine connections on $(F,g_F)$.
    Let $D$, $D^*$ be dually flat affine connections on $B\times_f F$ and $G$ be its warped product metric. We assume that $D$ satisfies
    \begin{itemize}
        \item $D_XY \mbox{ is horizontal,  } i.e. , \,D_XY\in\mathcal{L}(B)$ for all $X,Y\in\mathcal{L}(B)$,
        \item $\mbox{Ver }(D_VW) = \mbox{Lift }(\widetilde{\nabla}_VW)$ for all $V,W\in\mathcal{L}(F)$,
    \end{itemize}
    where Ver is the projection to $TF$.
\end{assume}
\begin{remark}
    Denote the curvature with respect to $D$ by $R$ and the curvature with respect to $\widetilde{\nabla}$ by ${}^FR$. Denote their duals by $R^*$ and $^FR^{*}$.
    Note that by Proposition \ref{curvature_prop}, we have $R^* = {}^FR^* = 0$ when $R = {}^FR = 0$.
\end{remark}
In the following arguments in this section, we assume this assumption without mentioning.
We shall study what $R = {}^FR =0$ means and characterize admissible connections on $B$ (Theorem 4.1).

\subsection{Calculations of $G(R(U,V)V,U)$}\label{second}
We first consider in vertical directions.
We are going to calculate the Gauss equation (the relatioin between $R$ and $^FR$) for affine connections on $F$ and $M$ in a similar way to \cite{oneil}.
For $U,V,W,Q\in\mathcal{L}(F)$, it follows from Assumption 4.1 that 
\begin{eqnarray*}
    G(D_UD_VW,Q) &=& G(D_U(\mbox{Ver }D_VW),Q) + G(D_U(\mbox{Hor }D_VW),Q)\nonumber\\
    &=& G(\widetilde{\nabla}_U\widetilde{\nabla}_VW,Q) + \left\{UG(I\hspace{-.1em}I(V,W),Q) - G(I\hspace{-.1em}I(V,W),D^*_UQ)\right\}\nonumber\\
    &=& G(\widetilde{\nabla}_U\widetilde{\nabla}_VW,Q) - G(I\hspace{-.1em}I(V,W),\mbox{Hor }D^*_UQ)\nonumber\\
    &=& G(\widetilde{\nabla}_U\widetilde{\nabla}_VW,Q) -G(I\hspace{-.1em}I(V,W),I\hspace{-.1em}I^*(U,Q)),
\end{eqnarray*}
where $I\hspace{-.1em}I(W,X) := \mbox{Hor }{D}_WX$, which is an affine version of the second fundamental form on $F$. Thus we have
\begin{eqnarray}\label{gauss1}
    G(R(U,V)W,Q) &=& G({}^FR(U,V)W,Q) - G(I\hspace{-.1em}I(V,W),I\hspace{-.1em}I^*(U,Q)) + G(I\hspace{-.1em}I(U,W),I\hspace{-.1em}I^*(V,Q)).
\end{eqnarray}
Recall that ${}^FR = 0$ by Assumption \ref{assume2}. Observe from (\ref{beta}) that 
\begin{eqnarray*}
    G(I\hspace{-.1em}I(V,W),I\hspace{-.1em}I^*(U,Q)) &=& G\left(-\frac{G(V,W)}{f}\mbox{grad}f + \mbox{Hor}(P_VW), -\frac{G(U,Q)}{f}\mbox{grad}f - \mbox{Hor}(P_UQ)\right)\nonumber \\
    &=& \frac{\|\mbox{grad}f\|^2}{f^2}G(V,W)G(U,Q) - G(\mbox{Hor}(P_VW),\mbox{Hor}(P_UQ))\nonumber  \\
    &&\qquad+ \frac{G(V,W)G(\mbox{grad}f,P_UQ)}{f} - \frac{G(U,Q)G(\mbox{grad}f,P_VW)}{f},
\end{eqnarray*}
and similarly
\begin{eqnarray*}
    G(I\hspace{-.1em}I(U,W),I\hspace{-.1em}I^*(V,Q)) &=& \frac{\|\mbox{grad}f\|^2}{f^2}G(U,W)G(V,Q) - G(\mbox{Hor}(P_UW),\mbox{Hor}(P_VQ)) \nonumber \\
    &&\qquad+ \frac{G(U,W)G(\mbox{grad}f,P_VQ)}{f} - \frac{G(V,Q)G(\mbox{grad}f,P_UW)}{f}.
\end{eqnarray*}
Substituting these and letting $W=V$ and $Q = U$,
\begin{eqnarray*}
    G(R(U,V)V,U) &=& - \frac{\|\mbox{grad }f\|^2}{f^2}\left\{G(V,V)G(U,U) - G(U,V)^2\right\}\nonumber\\
    &&\qquad + G(\mbox{Hor}(P_VV),\mbox{Hor}(P_UU)) -G(\mbox{Hor}(P_UV),\mbox{Hor}(P_VU)) - \frac{G(V,V)G(\mbox{grad }f,P_UU)}{f} \nonumber\\
    &&\qquad + \frac{G(U,U)G(\mbox{grad }f,P_VV)}{f}+ \frac{G(U,V)G(\mbox{grad }f,P_VU)}{f} - \frac{G(V,U)G(\mbox{grad }f,P_UV)}{f}\nonumber\\
    &=& - \frac{\|\mbox{grad }f\|^2}{f^2}\left\{G(V,V)G(U,U) - G(U,V)^2\right\}\nonumber\\
    &&\qquad + G(\mbox{Hor}(P_VV),\mbox{Hor}(P_UU)) -G(\mbox{Hor}(P_VU),\mbox{Hor}(P_VU)) - \frac{G(V,V)G(\mbox{grad }f,P_UU)}{f} \nonumber\\
    &&\qquad + \frac{G(U,U)G(\mbox{grad }f,P_VV)}{f},
\end{eqnarray*}
where we used (\ref{aa}).
Recalling $R = 0$ by Assumption \ref{assume2}, for any $U,V\in\mathcal{L}(F)$, we find
\begin{eqnarray*}
    & &\frac{\|\mbox{grad }f\|^2}{f^2}\left\{G(V,V)G(U,U) - G(U,V)^2\right\}- G(\mbox{Hor}(P_VV),\mbox{Hor}(P_UU)) +G(\mbox{Hor}(P_VU),\mbox{Hor}(P_VU)) \nonumber\\
    &&\qquad+ \frac{G(V,V)G(\mbox{grad }f,P_UU)}{f} - \frac{G(U,U)G(\mbox{grad }f,P_VV)}{f} = 0.
\end{eqnarray*} 
Focusing on the symmetric and anti-symmetric parts in $U$ and $V$, and recalling $f(t) = t$, we obtain the following two equations: 
\begin{equation}
    \label{a}
    G(V,V)G(\mbox{grad }f,P_UU) = G(U,U)G(\mbox{grad }f,P_VV),
\end{equation}
\begin{equation}
    \label{b}
    \frac{1}{t^2}\left\{G(V,V)G(U,U) - G(U,V)^2\right\}- G(\mbox{Hor}(P_VV),\mbox{Hor}(P_UU)) +G(\mbox{Hor}(P_VU),\mbox{Hor}(P_VU)) = 0.
\end{equation}
To characterize admissible connections on $B$, we prepare some lemmas.
\begin{lemma}\label{lem4_1}
    We have
    \begin{equation*}
        {\rm{Hor}}\left(P_{\frac{U}{\|U\|}}\frac{U}{\|U\|}\right) = {\rm{Hor}}\left(P_{\frac{V}{\|V\|}}\frac{V}{\|V\|}\right)
    \end{equation*}
    for any $U,V\in\mathcal{L}(F)$.
\end{lemma}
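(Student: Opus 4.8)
The plan is to exploit the fact that the base $B = \mathbb{R}_{>0}$ is one-dimensional, so that the horizontal distribution is spanned by the single unit vector $\partial_t$, and then to read off the claim directly from the symmetric identity (\ref{a}). I would first clear the normalizations using the $C^\infty$-linearity of $P$ recorded in (\ref{cc}). Since $\|U\|^2 = G(U,U)$, applying (\ref{cc}) in both slots gives
\begin{equation*}
    \mbox{Hor}\left(P_{\frac{U}{\|U\|}}\frac{U}{\|U\|}\right) = \frac{1}{G(U,U)}\,\mbox{Hor}(P_UU),
\end{equation*}
and similarly for $V$, so it suffices to prove the equality $G(V,V)\,\mbox{Hor}(P_UU) = G(U,U)\,\mbox{Hor}(P_VV)$ of horizontal vector fields.

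Next I would make the horizontal projections explicit. Because $g_B(\partial_t,\partial_t) = 1$ and $f(t) = t$, one checks that $\partial_t$ is a $G$-orthonormal frame of $TB$ and that $\mbox{grad }f = \partial_t$ (the vertical part of $\mbox{grad }f$ vanishes since $Vf = 0$ for vertical $V$, and the horizontal coefficient is $\partial_t f = 1$). Hence every horizontal vector is recovered from its inner product with $\partial_t$; in particular, using that $\partial_t$ is horizontal so that only the horizontal part of $P_UU$ contributes,
\begin{equation*}
    \mbox{Hor}(P_UU) = G(P_UU,\partial_t)\,\partial_t = G(\mbox{grad }f, P_UU)\,\partial_t.
\end{equation*}

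Finally, substituting this expression (and its analogue for $V$) reduces the desired identity to the scalar equation
\begin{equation*}
    G(V,V)\,G(\mbox{grad }f, P_UU) = G(U,U)\,G(\mbox{grad }f, P_VV),
\end{equation*}
which is exactly (\ref{a}). Equivalently, dividing through by $G(U,U)G(V,V)$ shows that the scalar coefficient of $\partial_t$ in $\mbox{Hor}\big(P_{U/\|U\|}U/\|U\|\big)$ is independent of $U$, which is the assertion of the lemma.

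I do not expect a genuine obstacle here: the whole argument hinges on the one-dimensionality of $B$, which collapses every horizontal vector to a single scalar multiple of $\partial_t$, and that scalar is precisely what (\ref{a}) controls. It is worth noting that equation (\ref{b}) plays no role in this particular lemma; it will be needed only for the subsequent steps toward Theorem 4.1.
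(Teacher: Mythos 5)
Your proof is correct and follows essentially the same route as the paper: both reduce the claim to the scalar identity (\ref{a}) after clearing the normalizations via (\ref{cc}), and both rely on the one-dimensionality of the base (equivalently, that $\mathrm{grad}\, f = \partial_t$ spans the horizontal space) to upgrade equality of inner products with $\mathrm{grad}\, f$ to equality of the horizontal projections. Your write-up simply makes explicit the step $\mbox{Hor}(P_UU) = G(\mbox{grad }f, P_UU)\,\partial_t$, which the paper leaves implicit.
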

\begin{proof}
    It follows from (\ref{a}) that
    \begin{equation*}
        G\left(\mbox{grad }f,\mbox{Hor}P_{\frac{U}{\|U\|}}\frac{U}{\|U\|}\right) = G\left(\mbox{grad }f,\mbox{Hor}P_{\frac{V}{\|V\|}}\frac{V}{\|V\|}\right)
    \end{equation*}
    and we get
    \begin{equation*}
        \mbox{Hor}\left(P_{\frac{U}{\|U\|}}\frac{U}{\|U\|}\right) = \mbox{Hor}\left(P_{\frac{V}{\|V\|}}\frac{V}{\|V\|}\right).
    \end{equation*}
\end{proof}

Hereafter, fix an arbitrary $x\in F$. Let $(\xi_i)$ be normal coordinates around $x$ on $F$ and denote $\partial_i = \frac{\partial}{\partial\xi_i}$.
\begin{lemma}\label{lem4_2}
    We have
    \begin{equation*}
        ({\rm{Hor}}P_{\partial_i}\partial_i)_{(x,t)} = \left(t\frac{\partial}{\partial t}\right) \mbox{ or } \left(- t\frac{\partial}{\partial t}\right).
    \end{equation*}
\end{lemma}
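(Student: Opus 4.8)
The plan is to package the horizontal component of $P$ on vertical vectors into a single symmetric bilinear form, and then to read (\ref{a}) and (\ref{b}) as algebraic constraints on that form. First I would record that, since $B=\mathbb{R}_{>0}$, the horizontal distribution is the line spanned by $\partial_t = \mathrm{grad}\,f$. Because $P=(D-D^*)/2$ is a difference of affine connections it is a tensor field (cf. (\ref{cc})), so for vertical $U,V$ the vector $\bigl(\mathrm{Hor}\,P_UV\bigr)_{(x,t)}$ depends only on the pointwise values of $U,V$ and can be written as $\mathrm{Hor}\,P_UV = \beta(U,V)\,\partial_t$ for a bilinear form $\beta$ on $T_xF$; by (\ref{aa}) it is symmetric. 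In the normal coordinates $(\xi_i)$ one has $G(\partial_i,\partial_j) = t^2\delta_{ij}$ at $(x,t)$, so $\|\partial_i\| = t$ and $\widehat{\partial_i} = \partial_i/t$.

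Next I would invoke Lemma \ref{lem4_1}, which says precisely that $\beta(\hat U,\hat U)$ takes one and the same value $\lambda = \lambda(x,t)$ for every $G$-unit vertical vector $\hat U$ (equivalently, (\ref{a}) asserts that $\beta(U,U)/G(U,U)$ is independent of $U$). Since $\beta$ is a quadratic form, being constant on the $G$-unit sphere forces $\beta(U,U) = \lambda\,G(U,U)$, and polarization upgrades this to $\beta(U,V) = \lambda\,G(U,V)$ for all vertical $U,V$. In particular $\mathrm{Hor}\,P_{\partial_i}\partial_i = \lambda\,G(\partial_i,\partial_i)\,\partial_t = \lambda t^2\,\partial_t$, while the off-diagonal horizontal terms vanish; so it only remains to determine that $\lambda = \pm 1/t$.

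Finally I would substitute $\beta = \lambda G$ into (\ref{b}). Using $G(\mathrm{Hor}\,P_VV,\mathrm{Hor}\,P_UU) = \beta(V,V)\beta(U,U)$ and $G(\mathrm{Hor}\,P_VU,\mathrm{Hor}\,P_VU) = \beta(U,V)^2$ together with $\beta(V,V)\beta(U,U) - \beta(U,V)^2 = \lambda^2\bigl(G(V,V)G(U,U) - G(U,V)^2\bigr)$, equation (\ref{b}) collapses to $\bigl(\tfrac{1}{t^2} - \lambda^2\bigr)\bigl(G(V,V)G(U,U) - G(U,V)^2\bigr) = 0$. Choosing $U,V$ linearly independent (e.g. $U = \partial_i$, $V = \partial_j$ with $i\neq j$), the Cauchy--Schwarz factor is strictly positive, whence $\lambda^2 t^2 = 1$, i.e. $\lambda = \pm 1/t$. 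Therefore $\bigl(\mathrm{Hor}\,P_{\partial_i}\partial_i\bigr)_{(x,t)} = \lambda t^2\,\partial_t = \pm t\,\partial_t$, which is the assertion.

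The step I expect to carry the real weight is the second one: (\ref{a}) (equivalently Lemma \ref{lem4_1}) already pins $\beta$ down to a scalar multiple of $G$ along the fiber, so that every cross term is controlled before (\ref{b}) is touched, and (\ref{b}) then reduces to a one-line scalar identity. The only genuine caveat is $\dim F = 1$, where no linearly independent pair $U,V$ exists and (\ref{b}) becomes vacuous; this degenerate (two-dimensional) situation lies outside the present argument and is treated separately in Section 7.
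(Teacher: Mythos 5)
Your argument is correct and is essentially the paper's own proof in slightly more abstract packaging: the paper also uses Lemma \ref{lem4_1} applied to $\partial_i+\partial_j$ to kill the cross term $\mathrm{Hor}\,P_{\partial_i}\partial_j$ (your polarization step) and then substitutes into (\ref{b}) with $U=\partial_i$, $V=\partial_j$, $i\neq j$, to get $G(\mathrm{Hor}\,P_UU,\mathrm{Hor}\,P_UU)=t^2$. Your closing caveat about $\dim F=1$ is apt and matches the paper, which likewise needs $i\neq j$ here and defers the two-dimensional case to Section 7.
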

\begin{proof}
    Put $U = \partial_i$ and $V = \partial_j(i\neq j)$. Then Lemma \ref{lem4_1} implies
    \begin{equation*}
        \frac{\mbox{Hor } P_{(U + V)}(U + V)}{\|U + V\|^2} = \frac{\mbox{Hor }P_UU}{\|U\|^2} = \frac{\mbox{Hor }P_VV}{\|V\|^2}.
    \end{equation*}
Note that, for all $t > 0$,
    \begin{equation*}
        \|U + V\|^2_{(x,t)} = \|U\|^2_{(x,t)} + \|V\|^2_{(x,t)} = 2t^2.
    \end{equation*}
This yields 
    \begin{equation*}
        \left\{\mbox{Hor}P_{(U + V)}(U + V)\right\}_{(x,t)} = 2\left(\mbox{Hor}P_UU\right)_{(x,t)} = 2(\mbox{Hor }P_VV)_{(x,t)},
    \end{equation*}
    and hence
    \begin{equation}\label{delta}
        (\mbox{Hor}P_UV)_{(x,t)} = 0.
    \end{equation}
Combining this with (\ref{b}), we have for all $t$,

    \begin{equation*}
        t^2= G_{(x,t)}(\mbox{Hor}P_VV,\mbox{Hor}P_UU) = G_{(x,t)}(\mbox{Hor}P_UU,\mbox{Hor}P_UU).
    \end{equation*}
    This proves the claim.
\end{proof}

\subsection{Calculations of $G(R(V,X)X,V)$}\label{coneconnection}
Now, we put $X = \frac{\partial}{\partial t}$ and define $k$ by $D_XX = k(t)\frac{\partial}{\partial t}$. 
\begin{lemma}\label{lem4_3}
    Let $\partial_i = \frac{\partial}{\partial\xi_i}$ as in Lemma \ref{lem4_2}.
    We have 
    \begin{equation*}
        (P_X\partial_i)_{(x,t)} = \left(\frac{1}{t}\partial_i\right)_{(x,t)} \mbox{ or } \left(- \frac{1}{t}\partial_i\right)_{(x,t)}
    \end{equation*}
    for all $t > 0$.
\end{lemma}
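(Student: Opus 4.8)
The plan is to pin down the vertical vector $P_X\partial_i$ by first computing all of its inner products $G(P_X\partial_i,\partial_j)$ against the normal frame and then inverting the (diagonal) vertical metric. The key observation is that the trilinear form $C(A,B,C):=G(P_AB,C)$ is totally symmetric: symmetry in the first two arguments is (\ref{aa}), symmetry in the last two is (\ref{bb}), and the transpositions $(1\,2)$ and $(2\,3)$ generate $S_3$. Consequently
\[
G(P_X\partial_i,\partial_j)=C(X,\partial_i,\partial_j)=C(\partial_i,\partial_j,X)=G(P_{\partial_i}\partial_j,X).
\]
This trades a quantity I do not yet control, namely how $P_X$ acts along the fiber, for one already computed in Subsection~\ref{second}: the pairing of $P_{\partial_i}\partial_j$ with the horizontal direction $X=\partial/\partial t=\mathrm{grad}\,f$.

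Next I would evaluate the right-hand side. Since $X$ is horizontal, $G(P_{\partial_i}\partial_j,X)=G(\mathrm{Hor}(P_{\partial_i}\partial_j),X)$. For $i\neq j$, equation (\ref{delta}) gives $\mathrm{Hor}(P_{\partial_i}\partial_j)_{(x,t)}=0$, so the pairing vanishes. For $i=j$, Lemma~\ref{lem4_2} gives $\mathrm{Hor}(P_{\partial_i}\partial_i)_{(x,t)}=\pm t\,\partial/\partial t$; pairing with $X=\partial/\partial t$ and using $G(\partial_t,\partial_t)=g_B(\partial_t,\partial_t)=1$ yields $\pm t$. Hence $G(P_X\partial_i,\partial_j)_{(x,t)}=\pm t\,\delta_{ij}$.

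Finally I would reconstruct $P_X\partial_i$ itself. By Lemma~\ref{3_2} the vector $P_X\partial_i$ is vertical, so I may expand it in the frame $(\partial_j)$ and read off the coefficients from the vertical metric. Because the coordinates are normal at $x$, $G(\partial_j,\partial_k)_{(x,t)}=t^2 g_F(\partial_j,\partial_k)|_x=t^2\delta_{jk}$; writing $P_X\partial_i=\sum_j c_j\partial_j$ then gives $G(P_X\partial_i,\partial_k)=c_k t^2$. Equating with $\pm t\,\delta_{ik}$ forces $c_k=\pm\frac{1}{t}\delta_{ik}$, that is, $P_X\partial_i=\pm\frac1t\partial_i$ at $(x,t)$ for every $t>0$, which is the claim.

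The points needing care are the following. First, the total symmetry of $C$ must be extracted cleanly from (\ref{aa})--(\ref{bb}). Second, I must verify that the sign $\pm$ is globally consistent: it is independent of $i$ because Lemma~\ref{lem4_1}, together with $\|\partial_i\|_{(x,t)}=t$ and the tensoriality (\ref{cc}), shows $\mathrm{Hor}(P_{\partial_i}\partial_i)$ is the same for all $i$, and it is constant in $t$ by continuity since $\pm t\neq 0$ for $t>0$. I remark that although the subsection is framed around $G(R(V,X)X,V)$, imposing $R=0$ only yields a matrix Riccati-type ODE for the self-adjoint operator $P_X$ on the vertical space, which does not by itself force $P_X=\pm\frac1t\,\mathrm{Id}$ without the algebraic input of Lemma~\ref{lem4_2}; thus the symmetry route above is the efficient one, and the curvature identity is better kept for determining $k$ in Theorem~4.1.
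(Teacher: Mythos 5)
Your proof is correct, but it follows a genuinely different route from the paper's. The paper proves this lemma by computing $G(R^*(V,X)X,V)$ (resp.\ $G(R(V,X)X,V)$) in the two sign cases of Lemma \ref{lem4_2} and invoking $R=R^*=0$ to obtain $G(P_XV,P_XV)=1$; it then separately shows $G(P_XV,\partial_j)=0$ for $j\neq i$ and $G(P_XV,X)=0$ to conclude that $P_XV$ is proportional to $V$, whence the norm computation fixes the factor to $\pm 1/t$. You instead bypass the curvature computation entirely: the total symmetry of $C(A,B,C)=G(P_AB,C)$, which does follow from (\ref{aa}) and (\ref{bb}) since the two transpositions generate $S_3$, converts every component $G(P_X\partial_i,\partial_j)$ into $G(\mathrm{Hor}(P_{\partial_i}\partial_j),X)$, and these are already known from Lemma \ref{lem4_2} and (\ref{delta}); verticality of $P_X\partial_i$ from Lemma \ref{3_2} plus the diagonal vertical metric $t^2\delta_{jk}$ at $x$ then reconstructs the vector. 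Your argument is shorter, needs no case split, and has the added benefit of directly producing the correlation between the sign of $\mathrm{Hor}(P_{\partial_i}\partial_i)$ and that of $P_X\partial_i$ --- a correlation the paper only establishes afterwards, at the start of the proof of Theorem \ref{mainthm}, using exactly the symmetry identity $G(P_XV,V)=G(X,P_VV)$ that underlies your approach. What the paper's route buys in exchange is the identity $G(P_XV,P_XV)=1$ extracted from $R^*=0$, and the second-derivative computations $G(D^*_XD^*_VX,V)$, which serve as the template reused immediately in the proof of Theorem \ref{mainthm} to determine $k(t)$; your proof leaves that curvature information unused here, which is harmless since the lemma does not need it.
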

\begin{proof}
    Put $V = \partial_i$. Recall that $(\mbox{Hor }P_VV)_{(x,t)} = t\frac{\partial}{\partial t} \mbox{ or } \left(-t\frac{\partial}{\partial t}\right)$ by Lemma \ref{lem4_2}.
    First, we consider the case $(\mbox{Hor}P_VV)_{(x,t)} = t\frac{\partial}{\partial t}$.
We deduce from (\ref{alpha}) that 
    \begin{eqnarray*}
        G_{(x,t)}(D^*_VD_X^*X,V) &=& G\left(-k(t)\left(\frac{1}{t}V-P_XV\right),V\right)\nonumber\\
        &=& -k(t)t + k(t)t\nonumber\\
        &=& 0
    \end{eqnarray*}
    for all $t > 0$, where the second equality follows since $G(P_XV,V) = G(P_VX,V) = G(X,P_VV) = t$ by (\ref{aa}) and (\ref{bb}).
    We similarly find from (\ref{alpha}) that
    \begin{eqnarray*}
        G_{(x,t)}(D^*_XD^*_VX,V) &=& XG\left(\frac{1}{t}V - P_XV,V\right) - G\left(\frac{V}{t}- P_XV,D_XV\right)\nonumber\\
        &=& \frac{\partial}{\partial t}\left(\frac{1}{t}t^2 - t\right) - G\left(\frac{V}{t} - P_XV,\frac{V}{t} + P_XV\right)\nonumber\\
        &=& -G\left(\frac{V}{t},\frac{V}{t}\right) + G(P_XV,P_XV)\nonumber\\
        &=& -1 + G(P_XV,P_XV).
    \end{eqnarray*}
Therefore we obtain for all $t > 0$, since $R^* = 0$,
    \begin{equation*}
        G(P_XV,P_XV) = 1.
    \end{equation*}

    Next we consider the case $(\mbox{Hor}P_VV)_{(x,t)} = -t\frac{\partial}{\partial t}$.
    We have
    \begin{eqnarray*}
        G_{(x,t)}(D_V(D_XX),V) &=& G\left(k(t)\left(\frac{1}{t}V+P_XV\right),V\right)\nonumber\\
        &=& tk(t) - tk(t)\nonumber\\
        &=& 0
    \end{eqnarray*}
    and
    \begin{eqnarray*}
        G_{(x,t)}(D_X(D_VX),V) &=& G\left(D_X\left(\frac{1}{t}V + P_XV\right),V\right)\nonumber\\
        &=& XG\left(\frac{1}{t}V + P_XV,V\right) - G\left(\frac{V}{t} + P_XV,\frac{V}{t} - P_XV\right)\nonumber\\
        &=&- 1 + G(P_XV,P_XV).
    \end{eqnarray*}
Since $R = 0$, we have $G_{(x,t)}(P_XV,P_XV) = 1$.

    For $U = \partial_j(i\neq j)$, using $(\mbox{Hor}P_VU)_{(x,t)} = 0$ in (\ref{delta}), (\ref{aa}) and (\ref{bb}), we have
    \begin{equation*}
        G_{(x,t)}(P_XV,U)  = G_{(x,t)}(P_VX,U) = G_{(x,t)}(X,P_VU) = 0.
    \end{equation*}
Moreover $G(P_XV,X) = G(V,P_XX) = 0$. Therefore $(P_XV)_{(x,t)}$ and $V_{(x,t)}$ are linearly dependent for all $t > 0$, which proves the claim.
\end{proof}
The next result is the aim of this section, which is a characterization of connections on the line $B$.
\begin{theorem}\label{mainthm}
    Under Assumption \ref{assume2}, we have
    \begin{equation*}
        k(t) = \frac{1}{t} \mbox{ or } \left(-\frac{1}{t}\right).
    \end{equation*}
\end{theorem}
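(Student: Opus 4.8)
The plan is to substitute the dichotomy for $P_X\partial_i$ supplied by Lemma~\ref{lem4_3} into the mixed curvature $G(R(V,X)X,V)$, as the title of this subsection suggests. Fix $x\in F$, take $V=\partial_i$ in the normal coordinates of Lemma~\ref{lem4_2}, and write $\varepsilon\in\{+1,-1\}$ for the sign in Lemma~\ref{lem4_3}, so that $P_XV=\frac{\varepsilon}{t}V$. Since $f(t)=t$ gives $Xf/f=1/t$, formula~(\ref{alpha}) reduces to $D_VX=D_XV=\frac{1+\varepsilon}{t}V$, while its dual version $D^*_XV=\frac{Xf}{f}V-P_XV$ gives $D^*_VX=\frac{1-\varepsilon}{t}V$. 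I also need $D^*_XX$: as $D,D^*$ are torsion free and $(D+D^*)/2$ is the Levi-Civita connection of $G$ (Proposition~\ref{levi}), which on the cone over the flat line satisfies $\frac{D_XX+D^*_XX}{2}=0$, the defining relation $D_XX=k(t)X$ yields $D^*_XX=-k(t)X$. Note $G(V,V)_{(x,t)}=t^2$ and $Vk=0$ since $k=k(t)$.

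With these reductions I would evaluate the curvature on $(V,X)$. Because $[V,X]=0$, we have $R(V,X)X=D_VD_XX-D_XD_VX$. In the case $\varepsilon=+1$ one has $D_VX=D_XV=\frac{2}{t}V$, hence $D_V(kX)=kD_VX$ and $D_X\!\left(\frac{2}{t}V\right)=\frac{2}{t^2}V$, so that $G(R(V,X)X,V)=2kt-2$. Imposing $R=0$ forces $k=1/t$, matching the first alternative.

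The step I expect to be the main obstacle is the case $\varepsilon=-1$, where the computation with $D$ degenerates: there $D_VX=\frac{1+\varepsilon}{t}V=0$, so $R(V,X)X=kD_VX=0$ identically and the curvature of $D$ imposes no condition on $k$. The remedy is to exploit dual flatness, $R^*=0$ (Proposition~\ref{curvature_prop}), and to repeat the same computation for $R^*(V,X)X=D^*_VD^*_XX-D^*_XD^*_VX$ using $D^*_XX=-kX$ and $D^*_VX=\frac{2}{t}V$; this gives $G(R^*(V,X)X,V)=-2kt-2$, so $R^*=0$ yields $k=-1/t$. This produces the pointwise dichotomy; since $k$ is smooth on the connected set $\mathbb{R}_{>0}$ and the two candidate functions $1/t$ and $-1/t$ are nowhere equal, $k$ must agree with a single one of them throughout.
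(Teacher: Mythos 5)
Your proposal is correct and follows essentially the same route as the paper: it splits on the sign $\varepsilon$ from Lemma \ref{lem4_3}, uses $R=0$ to force $k=1/t$ when $\varepsilon=+1$, and correctly identifies that the $D$-computation degenerates when $\varepsilon=-1$, switching to $R^*=0$ to get $k=-1/t$, exactly as in the paper's two cases. The only (harmless) differences are that you bypass the paper's preliminary step matching the sign of ${\rm Hor}\,P_VV$ with that of $P_XV$, and you add a continuity argument making the dichotomy global in $t$.
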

\begin{proof}
    Put $V = \partial_i$. 
    When $(\mbox{Hor}P_VV)_{(x,t)} = -t\frac{\partial}{\partial t}$, we have
    \begin{equation*}
        G_{(x,t)}(P_XV,V) = G_{(x,t)}(P_VX,V) = G_{(x,t)}(X,P_VV) = -t.
    \end{equation*}
Combining this with Lemma \ref{lem4_3}, we find 
    \begin{equation*}
        (P_VX)_{(x,t)} = -\frac{1}{t}V.
    \end{equation*}
We similarly find that $P_VX = \frac{1}{t}V$ if $\mbox{Hor }P_VV = t\frac{\partial}{\partial t}$.
Hence, we need to consider only the following two cases.

    First, we consider the case $(\mbox{Hor}P_VV)_{(x,t)} = t\frac{\partial}{\partial t}$ and $(P_VX)_{(x,t)} = \frac{1}{t}V$. We have
    \begin{eqnarray*}
        G(D_V(D_XX),V) &=& G\left(k(t)\left(\frac{1}{t}V + P_V\frac{\partial}{\partial t}\right),V\right) =  2tk(t),
    \end{eqnarray*}
and
    \begin{eqnarray*}
        G(D_X(D_VX),V) &=& XG(D_VX,V) - G(D_VX,D^*_XV)\nonumber\\
        &=& \frac{\partial}{\partial t}\left\{G\left(\frac{1}{t}V + P_XV,V\right)\right\}-G\left(\frac{V}{t} + P_XV,\frac{V}{t}-P_XV\right)\nonumber\\
        &=& \frac{\partial}{\partial t}(t + t) - 1 + 1 = 2.
    \end{eqnarray*}
Hence by $R = 0$, we obtain $k(t) = \frac{1}{t}$.

Next, we consider the case $(\mbox{Hor}P_VV)_{(x,t)} = -t\frac{\partial}{\partial t}$ and $(P_VX)_{(x,t)} = -\frac{1}{t}V$.
We similarly have 
    \begin{eqnarray*}
        G(D_V^*D_X^*X,V) &=& -k(t)G\left(\frac{1}{t}V - P_V\frac{\partial}{\partial t},V\right) = -2tk(t),
    \end{eqnarray*}
and
    \begin{eqnarray*}
        G(D_X^*D_V^*X,V) &=& \frac{\partial}{\partial t}(t + t) - G\left(\frac{V}{t} - P_XV,\frac{V}{t} + P_XV\right) = 2-1 + 1 = 2.
    \end{eqnarray*}
    Therefore $k(t) = -\frac{1}{t}$.
\end{proof}
From the above proof, we obtain that $\mbox{Hor }(P_{\partial_i}\partial_i) = t\frac{\partial}{\partial t}$ and $P_{\partial_i}X = \frac{1}{t}\partial_i$ if $k(t) = \frac{1}{t}$, and 
that $\mbox{Hor }(P_{\partial_i}\partial_i) = -t\frac{\partial}{\partial t}$ and $P_{\partial_i}X = -\frac{1}{t}\partial_i$ if $k(t) = -\frac{1}{t}$.

\subsection{Example 1: Denormalization}
Here we consider the denormalization (recall Subsection \ref{infogeo}) as an example of warped product with affine connections.
Since we can prove the isometry to a warped product in the same way as in Subsection {\ref{bkmcone}}, we omit a
detailed proof and see an explicit expression of an isometry between the warped product and the denormalization.

Let $\widetilde{\mathcal{S}}$ be the set of positive finite measures on a finite set $\mathcal{X}$. We define a map $h$ as 
\begin{eqnarray*}
    h:\mathbb{R}_{>0}\times \mathcal{S} &\rightarrow& \widetilde{\mathcal{S}}\nonumber\\
    (t,p) &\longmapsto& t^2p/4.
\end{eqnarray*}
We pull back $\tilde{g}$ on $\widetilde{\mathcal{S}}$ in (\ref{netric_denormalization}) by $h$ and define the induced metric $G$ on $\mathbb{R}_{>0}\times \mathcal{S}$. This $(\mathbb{R}_{>0}\times \mathcal{S},G)$ is a warped product and 
$c(t) := t^2p/4$ is a line of constant speed 1.

Let $\{\xi_1,\ldots,\xi_n\}$ be a coordinate system of $\mathcal{S}$. We adopt $\{\tau,\xi_1,\ldots,\xi_n\}$ as a coordinate system of $\tilde{\mathcal{S}}$ and 
denote its natural basis by $\tilde{\partial}_i = \frac{\partial}{\partial\xi_i}$ and $\tilde{\partial}_\tau = \frac{\partial}{\partial \tau}$.
For a vector field $X = X^i\partial_i\in \mathfrak{X}(\mathcal{S})$, we define $\tilde{X} := X^i\tilde{\partial_i}\in \mathfrak{X}(\widetilde{\mathcal{S}})$.

We can set affine connections on the denormalization as in Subsection \ref{infogeo}. In \cite{amari}, it is expressed as follows. For $X,Y\in \mathfrak{X}(\mathcal{S})$,
\begin{eqnarray*}
    \widetilde{\nabla}^{(\alpha)}_{\widetilde{X}}\widetilde{Y} &=& \widetilde{(\nabla^{(\alpha)}_XY)} - \frac{1 + \alpha}{2}\langle\widetilde{X},\widetilde{Y}\rangle\tilde{\partial}_\tau,\nonumber\\
    \widetilde{\nabla}^{(\alpha)}_{\tilde{\partial}_\tau}\widetilde{X} &=& \widetilde{\nabla}^{(\alpha)}_{\widetilde{X}}\tilde{\partial}_\tau = \frac{1-\alpha}{2}\frac{1}{\tau}\widetilde{X},\nonumber\\
    \widetilde{\nabla}^{(\alpha)}_{\tilde{\partial}_\tau}\tilde{\partial}_\tau &=& -\frac{1 + \alpha}{2}\frac{1}{\tau}\tilde{\partial}_\tau.
\end{eqnarray*}
Also, the metric $\widetilde{g}$ is expressed as
\begin{equation*}
    \widetilde{g}_{ij} = \tau g_{ij},\quad \widetilde{g}_{i\tau} = 0,\quad\widetilde{g}_{\tau\tau} = \frac{1}{\tau}.
\end{equation*}

We can check that this connection satisfies Assumption \ref{assume2} by direct calculations, that is to say, the $\alpha$-connection on the denormalization is compatible with the warped product structure and their curvatures vanish at $\alpha = \pm{1}$.
Let us see that the results we obtained in Subsections \ref{second} and \ref{coneconnection} are also obtained in this situation.
We set $\tau = t^2/4$. Note that $\|\partial_\tau\| := \sqrt{\tilde{g}(\partial_\tau,\partial_\tau)}= 1/\sqrt{\tau}$. We have, by omitting the tilde for simplicity, 
\begin{eqnarray*}
    D_{\frac{\partial_\tau}{\|\partial_\tau\|}}X &=& \frac{1}{\|\partial_\tau\|}\frac{1-\alpha}{2}\frac{1}{\tau} X = \frac{1-\alpha}{2}\frac{1}{\sqrt{\tau}}X = \frac{1-\alpha}{t}X,\nonumber\\
    P_X\frac{\partial_\tau}{\|\partial_\tau\|} &=& P_{\frac{\partial_\tau}{\|\partial_\tau\|}}X = \frac{1}{2}\left(\frac{1-\alpha}{t}-\frac{1 + \alpha}{t}\right)X = -\frac{\alpha}{t}X,\nonumber\\
    D_{\frac{\partial_\tau}{\|\partial_\tau\|}}\frac{\partial_\tau}{\|\partial_\tau\|} &=& \sqrt{\tau}\left\{(\partial_\tau\sqrt{\tau})\partial_\tau + \frac{1}{\|\partial_\tau\|}\left(-\frac{1+\alpha}{2}\frac{1}{\tau}\partial_\tau\right)\right\} = -\frac{\alpha}{2}\partial_\tau = -\frac{\alpha}{t}\frac{\partial_\tau}{\|\partial_\tau\|},
\end{eqnarray*}
where $D = \widetilde{\nabla}^{(\alpha)}$.
When $\alpha = \pm{1}$, these equations are compatible with the connections in Subsection \ref{coneconnection}.

\subsection{Example 2: BKM cone}\label{bkmcone}
Next, we consider $\mathbb{P}(n)$ equipped with the extended BKM metric (recall Subsection 2.2), which we call the BKM cone.
We first show that $\mathbb{P}(n)$ with the extended monotone metric (not only the BKM metric) has a warped product structure. 

\begin{proposition}
   $\mathbb{P}(n)$ equipped with the extended monotone metric is a warped product. Precisely, there exists an isometry as follows:
    \begin{eqnarray*}
        \mathbb{R}_{>0}\times_{l(t) = t}\mathcal{D} &\rightarrow& (\mathbb{P}(n),g^f)\nonumber\\
        (t,\rho) &\mapsto& \frac{t^2\rho}{4},
    \end{eqnarray*}
    where $g^f$ is an arbitrary monotone metric.
\end{proposition}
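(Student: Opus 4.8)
The plan is to exhibit the stated map, which I denote $\phi(t,\rho) = \frac{t^2}{4}\rho$, as a diffeomorphism and then to pull back $g^f$ and match it term by term against a warped product metric $dt^2 + t^2 g_F$ on $\mathbb{R}_{>0}\times\mathcal{D}$, reading off the fiber metric $g_F$ along the way. That $\phi$ is a diffeomorphism is immediate: each $A\in\mathbb{P}(n)$ factors uniquely as $A = \frac{t^2}{4}\rho$ with $\rho = A/\mathrm{Tr}(A)\in\mathcal{D}$ and $t = 2\sqrt{\mathrm{Tr}(A)}>0$, and both $\phi$ and this inverse are smooth since $\mathrm{Tr}$ is smooth and strictly positive on $\mathbb{P}(n)$.

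The two analytic inputs I need are consequences of the defining data $c(x,y) = 1/(yf(x/y))$ together with $f(1)=1$. The first is homogeneity: $g^f_{\lambda A}(X,Y) = \frac{1}{\lambda}g^f_A(X,Y)$ for every $\lambda>0$. I would prove this by the substitution $\xi = \lambda\xi'$, $\eta = \lambda\eta'$ in the double contour integral defining $g^f$; since $c$ is homogeneous of degree $-1$ and each resolvent $(\xi-\lambda A)^{-1}$ scales by $\lambda^{-1}$ while $d\xi\,d\eta$ contributes $\lambda^2$, exactly one factor $\lambda^{-1}$ survives, and the rescaled contours still encircle the spectrum of $A$. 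The second input is a normalization identity: in an orthonormal eigenbasis of $\rho$ with eigenvalues $r_k$, evaluating the contour integral by residues gives $g^f_\rho(X,Y) = \sum_{k,l}c(r_k,r_l)\overline{X_{kl}}Y_{kl}$, and since $f(1)=1$ forces $c(r_k,r_k)=1/r_k$, the diagonal terms collapse to $g^f_\rho(\rho,V) = \sum_k V_{kk} = \mathrm{Tr}(V)$ for every Hermitian $V$. In particular $g^f_\rho(\rho,\rho)=\mathrm{Tr}(\rho)=1$, while $g^f_\rho(\rho,V)=0$ whenever $V$ is traceless, that is, tangent to $\mathcal{D}$.

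It then remains to assemble the pullback. The differential of $\phi$ at $(t,\rho)$ sends $\partial_t$ to $\frac{t}{2}\rho$ and a fiber vector $V\in T_\rho\mathcal{D}$ (a traceless Hermitian matrix) to $\frac{t^2}{4}V$. Writing $A=\frac{t^2}{4}\rho$ and applying homogeneity with $\lambda=t^2/4$ together with the normalization identity, the three components of $\phi^{*}g^f$ become $\phi^{*}g^f(\partial_t,\partial_t)=g^f_\rho(\rho,\rho)=1$, the cross term $\phi^{*}g^f(\partial_t,V)=\frac{t}{2}g^f_\rho(\rho,V)=0$, and $\phi^{*}g^f(V,W)=\frac{t^2}{4}g^f_\rho(V,W)$. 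This is precisely the warped product metric $dt^2+t^2 g_F$ on $\mathbb{R}_{>0}\times_{l(t)=t}\mathcal{D}$ with warping function $l(t)=t$ and fiber metric $g_F:=\frac{1}{4}\,g^f|_{\mathcal{D}}$, which establishes the isometry.

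I expect the homogeneity step to be the main obstacle: one must check that the change of variables keeps the contours encircling the spectrum of $A$, and that the degree $-1$ homogeneity of $c$ combines correctly with the two resolvent factors and the Jacobian to leave a single factor $\lambda^{-1}$. Once homogeneity and the $f(1)=1$ normalization are secured, identifying the warped product is a direct bilinear computation, and nothing uses the trace-one constraint beyond restricting $\rho$ to $\mathcal{D}$, so the same argument covers the whole of $\mathbb{P}(n)$.
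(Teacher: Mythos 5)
Your proposal is correct, and it follows the same overall strategy as the paper: pull back $g^f$ along $(t,\rho)\mapsto t^2\rho/4$ and reduce everything to a degree~$-1$ homogeneity of the monotone metric in the base point. The difference lies in how the two key facts are obtained. The paper establishes homogeneity by diagonalizing $\rho$, choosing the explicit basis $X_1,\dots,X_4$ of $T\mathbb{P}(2)$, and evaluating the contour integrals termwise to get $g^f_{k\rho}=\tfrac1k g^f_\rho$, with the $n\times n$ case asserted to be analogous; you instead rescale the contour variables $\xi=\lambda\xi'$, $\eta=\lambda\eta'$ and use the degree~$-1$ homogeneity of $c$ directly, which gives the identity for all $n$ at once and avoids the basis computation. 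Your argument is also more complete on one point: the paper verifies $G(\partial_t,\partial_t)=\operatorname{Tr}\rho$ and $G(\partial_z,\partial_z)=t^2G_{(1,\rho)}(\partial_z,\partial_z)$ but never explicitly checks that the cross terms $G(\partial_t,V)$ vanish for $V$ tangent to $\mathcal{D}$, which is needed for the metric to be a warped product; your normalization identity $g^f_\rho(\rho,V)=\operatorname{Tr}V$ (from $c(r,r)=1/r$, i.e.\ $f(1)=1$) supplies exactly this, and also gives $g^f_\rho(\rho,\rho)=1$ without the paper's separate computation along the ray $\gamma_0(t)=t^2\rho_0/4$. The one step you flag as delicate --- that the rescaled contours still encircle the spectrum --- is indeed fine, since the spectrum of $\lambda A$ is $\lambda$ times that of $A$. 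In short, both proofs buy the same proposition; yours trades the paper's concrete $2\times 2$ residue calculations for a cleaner scaling argument that handles general $n$ and the off-diagonal blocks uniformly.
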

\begin{proof}
    For simplicity, we calculate $2\times 2$ matrices as in \cite{dit}. The following argument can be easily extended to the $n\times n$ case. 
    For an arbitrary $\rho \in\mathcal{D}$, there exists a unitary matrix $U$ such that $U\rho U^* = \rho_0$, where 
    $\rho_0 = \mbox{diag}[x,y]$ for some $x,y\in\mathbb{R}$.
    We set
    \begin{equation*}
        X_1 = \begin{pmatrix}
            2 & 0\\
            0 & 0
        \end{pmatrix},\quad X_2 = \begin{pmatrix}
            0 & 0\\
            0 & 2
        \end{pmatrix},\quad 
        X_3 = \begin{pmatrix}
            0 & 1\\
            1 & 0
        \end{pmatrix},\quad 
        X_4 = \begin{pmatrix}
            0 & i\\
            -i & 0
        \end{pmatrix}.
    \end{equation*}
    These $X_1,X_2,X_3,X_4$ form an orthogonal basis of every tangent space of $(\mathbb{P}(2),g^f)$.
 Let us calculate the length of these vectors at $\rho_0$:
    \begin{eqnarray*}
        g_{\rho_0}^f(X_1,X_1) &=& \frac{1}{(2\pi i)^2}\mathrm{Tr}\oint\oint c(\xi,\eta)\begin{pmatrix}
            2 & 0\\
            0 & 0
        \end{pmatrix}\begin{pmatrix}
            \frac{1}{\xi-x} & 0\\
            0 & \frac{1}{\xi-y}
        \end{pmatrix}\begin{pmatrix}
            2 & 0\\
            0 & 0
        \end{pmatrix}\begin{pmatrix}
            \frac{1}{\eta-x} & 0\\
            0 & \frac{1}{\eta-y}
        \end{pmatrix}\, d\xi d\eta\nonumber\\
        &=& \frac{1}{(2\pi i)^2}\mathrm{Tr}\oint\oint c(\xi,\eta)\begin{pmatrix}
            \frac{4}{(\xi - x)(\eta - x)} & 0\\
            0 & 0
        \end{pmatrix}\, d\xi d\eta\nonumber\\
        &=& 4c(x,x),\nonumber\\
        g^f_{\rho_0}(X_2,X_2) &=& 4c(y,y),\nonumber\\
        g_{\rho_0}^f(X_3,X_3) &=& \frac{1}{(2\pi i)^2}\mathrm{Tr}\oint\oint c(\xi,\eta)\begin{pmatrix}
            \frac{1}{(\xi-y)(\eta-x)} & 0\\
            0 & \frac{1}{(\xi-x)(\eta-y)}
        \end{pmatrix}\, d\xi d\eta\nonumber\\
        &=& 2c(x,y),\nonumber\\
        g_{\rho_0}^f(X_4,X_4) &=& \frac{1}{(2\pi i)^2}\mathrm{Tr}\oint\oint c(\xi,\eta)\begin{pmatrix}
            \frac{1}{(\xi-y)(\eta-x)} & 0\\
            0 & \frac{1}{(\xi-x)(\eta-y)}
        \end{pmatrix}\, d\xi d\eta\nonumber\\
        &=& 2c(x,y).
    \end{eqnarray*}
    These calculations show that, for any $k> 0$ and any tangent vectors $X$ and $Y$, we have 
    \begin{equation*}
        g^f_{k\rho}(X,Y) = g^f_{k\rho_0}(UXU^*,UYU^*) = \frac{1}{k}g^f_{\rho_0}(UXU^*,UYU^*) = \frac{1}{k}g^f_\rho(X,Y),
    \end{equation*}
    where we used the fact that $g^f_{U\rho U^*}(UXU^*,UYU^*) = g^f_\rho(X,Y)$.
    Thus, we obtain
    \begin{equation}\label{mono1}
        g^f_{k\rho}(kX,kX) = kg^f_\rho(X,X).
    \end{equation}
We define $h$ as 
    \begin{eqnarray*}
        h:\mathbb{R}_{>0}\times\mathcal{D} &\rightarrow& \mathbb{P}(2)\nonumber\\
        (t,\rho) &\mapsto& \frac{t^2\rho}{4}.
    \end{eqnarray*}
    We pull back $g^f$ on $\mathbb{P}(n)$ by $h$ and define $G$ on $\mathbb{R}_{>0}\times \mathcal{D}$.
    We show that $G$ is a warped product metric on $\mathbb{R}_{>0}\times\mathcal{D}$.
    We consider the lines
    \begin{equation*}
        \gamma(t) := \frac{t^2\rho}{4},\quad \gamma_0(t) := \frac{t^2\rho_0}{4}.
    \end{equation*}
    Then we find 
    \begin{eqnarray*}
        G_{(t,\rho)}\left(\frac{\partial}{\partial t},\frac{\partial}{\partial t}\right) &=& g_{\gamma(t)}^f(\gamma'(t),\gamma'(t))\nonumber\\
        &=& g^f_{U\gamma(t)U^*}(U\gamma'(t)U^*,U\gamma'(t)U^*)\nonumber\\
        &=&g^f_{\gamma_0(t)}(\gamma_0'(t),\gamma_0'(t))\nonumber\\
         &=& g_{\gamma_0(t)}^f\left(\begin{pmatrix}
            tx/2 & 0\\
            0 & 0
        \end{pmatrix},\begin{pmatrix}
            tx/2 & 0\\
            0 & 0
        \end{pmatrix}\right)+g_{\gamma_0(t)}^f\left(\begin{pmatrix}
            0 & 0\\
            0 & ty/2
        \end{pmatrix},\begin{pmatrix}
            0 & 0\\
            0 & ty/2
        \end{pmatrix}\right)\nonumber\\
        &=& \left(\frac{tx}{2}\right)^2 \frac{4}{t^2x} + \left(\frac{ty}{2}\right)^2 \frac{4}{t^2y}\nonumber\\
        &=& \mathrm{Tr}\rho.
    \end{eqnarray*}
    Hereafter, we assume $x + y = 1$. We now get
    \begin{equation}\label{warp1}
        G_{(t,\rho)}\left(\frac{\partial}{\partial t},\frac{\partial}{\partial t}\right) = 1.
    \end{equation}

    Let us calculate $dh$. Since $h$ is expressed by the natural coordinates of $\mathbb{R}_{>0}\times \mathcal{D}$ and $\mathbb{P}(2)$ as 
    \begin{equation*}
        \left(t,\begin{pmatrix}
            x & z + iw\\
            z - iw & 1-x
        \end{pmatrix}\right)\longmapsto\frac{t^2}{4}\begin{pmatrix}
            x & z + iw\\
            z - iw & 1-x
        \end{pmatrix} = \begin{pmatrix}
            a & b + ic\\
            b - ic & d
        \end{pmatrix},
    \end{equation*}
    we have
    \begin{equation*}
        (Jh)_\rho = \begin{pmatrix}
            tx/2 & t^2/4  & 0 & 0 \\
            tz/2 & 0  & t^2/4 & 0 \\
            tw/2 & 0  & 0 & t^2/4 \\
            t(1-x)/2 & -t^2/4  & 0 & 0 \\
        \end{pmatrix}.
    \end{equation*}
The pull-back metric $G$ satisfies
    \begin{eqnarray*}
        G\left(\left(\frac{\partial}{\partial z}\right)_{(t,\rho)},\left(\frac{\partial}{\partial z}\right)_{(t,\rho)}\right) = g^f\left(\frac{t^2}{4}\left(\frac{\partial}{\partial b}\right)_{\frac{t^2}{4}\rho},\frac{t^2}{4}\left(\frac{\partial}{\partial b}\right)_{\frac{t^2}{4}\rho}\right),
    \end{eqnarray*}
    where
    \begin{equation*}
        \left(\frac{\partial}{\partial b}\right)_{\frac{t^2}{4}\rho} := \begin{pmatrix}
            0 & 1\\
            1 & 0
        \end{pmatrix}.
    \end{equation*}
    Using (\ref{mono1}), we obtain
    \begin{eqnarray*}
        g^f\left(\frac{t^2}{4}\left(\frac{\partial}{\partial b}\right)_{\frac{t^2}{4}\rho},\frac{t^2}{4}\left(\frac{\partial}{\partial b}\right)_{\frac{t^2}{4}\rho}\right) &=& t^2g^f_{\rho/4}\left(\frac{1}{4}\begin{pmatrix}
            0 & 1\\
            1 & 0
        \end{pmatrix}
            ,\frac{1}{4}\begin{pmatrix}
                0 & 1\\
                1 & 0
            \end{pmatrix}\right)\nonumber\\
        &=& t^2G\left(\left(\frac{\partial}{\partial z}\right)_{(1,\rho)},\left(\frac{\partial}{\partial z}\right)_{(1,\rho)}\right).
    \end{eqnarray*}
    Hence,
    \begin{equation}\label{warp2}
        G_{(t,\rho)}\left(\frac{\partial}{\partial z},\frac{\partial}{\partial z}\right) = t^2G_{(1,\rho)}\left(\frac{\partial}{\partial z},\frac{\partial}{\partial z}\right).
    \end{equation}
    The same equation holds for $\frac{\partial}{\partial x}$ and $\frac{\partial}{\partial w}$.
    Combining this with \eqref{warp1}, we see that $G$ is a warped product metric with the warping function $l(t) = t$.
\end{proof}

    For $n=2$, if we take trivial coordinates of $\mathcal{D}$ such as 
    \begin{equation*}
        \rho(x,y,z) = \begin{pmatrix}
            x & y + iz\\
            y - iz & 1-x
        \end{pmatrix},
    \end{equation*}
    the mixture connection is an affine connection, for which $\{x,y,z\}$ is affine coordinates.
    For example, we have the following calculation for the mixture connection ${\nabla}^{(m)}$. Set $X = \frac{\partial}{\partial x}$, $Y = \phi\frac{\partial}{\partial y}$, for an arbitrary function $\phi$ on $\mathcal{D}$. Then we have
    \begin{equation*}
        (\nabla^{(m)}_XY)_\rho = \left\{\nabla^{(m)}_{\frac{\partial}{\partial x}}\left(\phi\frac{\partial}{\partial y}\right) \right\}_{\rho}= \left(\frac{\partial \phi}{\partial x}\frac{\partial}{\partial y}\right)_{\rho} = \frac{\partial \phi}{\partial x}\begin{pmatrix}
            0 & 1\\
            1 & 0
        \end{pmatrix} = \frac{\partial}{\partial x}\left(\phi\begin{pmatrix}
            0 & 1\\
            1 & 0
        \end{pmatrix}\right) = X(Y\rho).
    \end{equation*}
    Similarly, if we take the coordinates of $\mathbb{P}(2)$ such as 
    \begin{equation*}
        \widetilde{\rho} = \begin{pmatrix}
            \alpha & \beta + i\gamma\\
            \beta - i \gamma & \zeta
        \end{pmatrix},
    \end{equation*}
    then the connection $D$ whose affine coordinate system is $\{\alpha,\beta,\gamma,\zeta\}$ satisfies $(D_XY)_{\widetilde{\rho}} = X(Y\widetilde{\rho})$.

 For another coordinates such as 
\begin{equation*}
    \widetilde{\rho} = \tau\begin{pmatrix}
        x & y + iz\\
        y-iz & 1-x
    \end{pmatrix},
\end{equation*}
we have $D_{\partial_\tau}\partial_\tau = 0$ and $D_{\partial_x}\partial_x = D_{\partial_y}\partial_y  = D_{\partial_z}\partial_z = 0$. Combining this with $\widetilde{\nabla}_{\partial_x}\partial_x = \widetilde{\nabla}_{\partial_y}\partial_y = \widetilde{\nabla}_{\partial_z}\partial_z = 0$, we see that the connection $D$ defined above satisfies Assumption \ref{assume2}.

\begin{remark}
    In \cite{grasseli}, quantum $\alpha$-connections on the set of positive definite matrices and their dually flatness are studied. Also for the quantum $\alpha$-connections, we can check the same compatibility between connections and the warped product structure as that of the classical denormalization, which we studied in Subsection 4.3.
\end{remark}

\section{Connections on the Takano Gaussian space}
In this section, we consider the Takano Gaussian space (recall Subsection 2.3) and show an analogue to Theorem 4.1.

Let $(B,g_B),(F,g_F)$ be Riemannian manifolds and $\nabla^F$ denotes the Levi-Civita connection of $F$.
We furnish $M := B \times F$ with a metric $G$ such that
\begin{equation*}
    G := f^2 g_B + b^2 g_F,
\end{equation*}
where $f$, $b$ are positive functions on $B$. This is the same situation as in the Takano Gaussian space.
Denote by $\nabla$ the Levi-Civita connection on $(M,G)$, and by $\mathcal{L}(F)$, $\mathcal{L}(B)$ the sets of lifts of tangent vector fields of $F$ to $M$, $B$ to $M$, respectively.
Simple calculations show that $\nabla_XY$ is horizontal for any $X,Y\in\mathcal{L}(B)$ and $\mbox{Ver}\nabla_VW = \mbox{Lift}(\nabla^F_VW)$ for any $V,W\in\mathcal{L}(F)$.

From now on, we set $M:=L^{n+1}$, $F:= \{(m_1,\ldots,m_n) | m_i\in\mathbb{R}\}$ and $B := \{\sigma\in\mathbb{R}_{>0}\}$. Let $G$ be the Fisher metric on $M$. Let $D$ be an arbitrary affine connection on $M$. We define the affine connection $\widetilde{\nabla}^F$ on the fiber space $F$ by the natural projection of $D$. 

We fix our framework.
\begin{assume}\label{assume_5}
We assume that $D$ satisfies
    \begin{itemize}
        \item $D_XY \mbox{ is horizontal,  } i.e. ,\, D_XY\in\mathcal{L}(B)$ for any $X,Y\in\mathcal{L}(B)$,
        \item $\mbox{Ver }(D_VW) = \mbox{Lift }(\widetilde{\nabla}^F_VW)$ for any $V,W\in\mathcal{L}(F)$.
    \end{itemize}
    Let $R$ be the curvature with respect to $D$, $^FR$ be the curvature with respect to $\widetilde{\nabla}^F$, and $R^*$ and $^FR^{*}$ be their duals.
    We also assume $R = {}^FR =0$.
\end{assume}
\begin{remark}
    If we take an $\alpha$-connection of the Takano Gaussian space as $D$, we see that $(F,\widetilde{\nabla}^F)$ is dually flat from the expression of the Christoffel symbols of the Takano Gaussian space in Subsection \ref{takano_gauss}.
    \end{remark}
In the following arguments in this section, we assume this assumption without mentioning.
As we saw in Subsection 4.1, the following equations hold:
\begin{equation}
    \label{c}
    G(V,V)G(\mbox{grad }b,P_UU) = G(U,U)G(\mbox{grad }b,P_VV),
\end{equation}
\begin{equation}
    \label{d}
    \frac{\|\mbox{grad }b\|^2}{b^2}\left\{G(V,V)G(U,U) - G(U,V)^2\right\}- G(\mbox{Hor}(P_VV),\mbox{Hor}(P_UU)) +G(\mbox{Hor}(P_VU),\mbox{Hor}(P_VU)) = 0,
\end{equation}
where $b(\sigma) = \frac{\sqrt{2n}}{\sigma}$.
In the following arguments, we set $X =\partial_\sigma$, $U= \partial_i,V = \partial_j$, where $\partial_\sigma = \frac{\partial}{\partial\sigma}$ and $\partial_i = \frac{\partial}{\partial m_i}$.
To characterize the connections on the line, we prepare some lemmas.

\begin{lemma}\label{prop5_4}
    We have
    \begin{equation*}
        ({\rm{Hor}}P_UU)_{(x,\sigma)} = {\frac{1}{2n\sigma}}\partial_\sigma \mbox{ or } \left(- {\frac{1}{2n\sigma}}\partial_\sigma\right).
    \end{equation*}
\end{lemma}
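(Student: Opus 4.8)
The statement is the Takano-space counterpart of Lemma \ref{lem4_2}, and the plan is to reproduce that argument verbatim, with (\ref{c}) and (\ref{d}) playing the roles of (\ref{a}) and (\ref{b}) and using crucially that the base $B=\{\sigma\}$ is one-dimensional. Throughout I work at the fixed point $x$ with $U=\partial_i$, $V=\partial_j$, $i\neq j$, recalling that the fiber is flat, so that the $m_i$ serve as normal coordinates on $F$ and $G(\partial_i,\partial_j)=\frac{1}{\sigma^2}\delta_{ij}$, while $G(\partial_\sigma,\partial_\sigma)=\frac{2n}{\sigma^2}$.

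First I would establish the analogue of Lemma \ref{lem4_1}: from (\ref{c}) one gets $G(\mathrm{grad}\,b,\mathrm{Hor}\,P_{U/\|U\|}(U/\|U\|))=G(\mathrm{grad}\,b,\mathrm{Hor}\,P_{V/\|V\|}(V/\|V\|))$, and since the horizontal space is spanned by $\partial_\sigma$ and $\mathrm{grad}\,b=G^{\sigma\sigma}b'(\sigma)\partial_\sigma$ is a nonzero horizontal vector, pairing with $\mathrm{grad}\,b$ determines a horizontal vector completely; hence $\mathrm{Hor}\,P_{U/\|U\|}(U/\|U\|)=\mathrm{Hor}\,P_{V/\|V\|}(V/\|V\|)$ for all fiber fields. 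Applying this to $U$, $V$ and to $U+V$, together with $\|U\|^2=\|V\|^2=\frac{1}{\sigma^2}$ and the orthogonality $\|U+V\|^2=\|U\|^2+\|V\|^2$, and expanding $P_{U+V}(U+V)=P_UU+2P_UV+P_VV$ via the symmetry (\ref{aa}), I would conclude $\mathrm{Hor}\,P_UV=0$ and $\mathrm{Hor}\,P_UU=\mathrm{Hor}\,P_VV$, exactly as in Lemma \ref{lem4_2}.

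Substituting $G(U,V)=0$, $\mathrm{Hor}\,P_UV=0$ and $\mathrm{Hor}\,P_VV=\mathrm{Hor}\,P_UU$ into (\ref{d}) collapses it to the scalar identity $\frac{\|\mathrm{grad}\,b\|^2}{b^2}\,G(U,U)\,G(V,V)=G(\mathrm{Hor}\,P_UU,\mathrm{Hor}\,P_UU)$. Here I would evaluate the coefficient explicitly: with $b(\sigma)=\frac{\sqrt{2n}}{\sigma}$ and $G^{\sigma\sigma}=\frac{\sigma^2}{2n}$ one finds $\mathrm{grad}\,b=-\frac{1}{\sqrt{2n}}\partial_\sigma$, hence $\|\mathrm{grad}\,b\|^2=\frac{1}{\sigma^2}$ and $\frac{\|\mathrm{grad}\,b\|^2}{b^2}=\frac{1}{2n}$; together with $G(U,U)=G(V,V)=\frac{1}{\sigma^2}$ this gives $\|\mathrm{Hor}\,P_UU\|^2=\frac{1}{2n\sigma^4}$. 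Writing $\mathrm{Hor}\,P_UU=c\,\partial_\sigma$ and using $G(\partial_\sigma,\partial_\sigma)=\frac{2n}{\sigma^2}$ yields $c^2=\frac{1}{4n^2\sigma^2}$, i.e. $c=\pm\frac{1}{2n\sigma}$, which is the claim.

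The proof is essentially a transcription of Lemma \ref{lem4_2}, so there are only two care points. The first is the upgrade from equality of the pairings $G(\mathrm{grad}\,b,\cdot)$ to equality of the horizontal vectors themselves; this rests entirely on $\dim B=1$ and on $b'(\sigma)\neq 0$, which makes $\mathrm{grad}\,b$ a nonvanishing generator of the horizontal line. The second, and the place where a stray factor is easiest to drop, is the evaluation of $\|\mathrm{grad}\,b\|^2$ with respect to the full warped metric $G$ rather than $g_B$, which forces the use of the inverse component $G^{\sigma\sigma}=\frac{\sigma^2}{2n}$; once $\frac{\|\mathrm{grad}\,b\|^2}{b^2}=\frac{1}{2n}$ is secured, everything else is determined.
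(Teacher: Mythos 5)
Your proposal is correct and follows essentially the same route as the paper: the analogue of Lemma \ref{lem4_1} from (\ref{c}), the polarization with $U+V$ to get $\mathrm{Hor}\,P_UV=0$, and then (\ref{d}) with $\frac{\|\mathrm{grad}\,b\|^2}{b^2}=\frac{1}{2n}$ to pin down $\|\mathrm{Hor}\,P_UU\|^2$. The only difference is that you make explicit the one-dimensionality of the horizontal space (and the nonvanishing of $\mathrm{grad}\,b$) needed to pass from equality of pairings to equality of vectors, a step the paper leaves implicit.
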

\begin{proof}
    In the same way as Lemma \ref{lem4_1}, we have 
    \begin{equation}\label{lem5_1}
        {\rm{Hor}}P_UU = {\rm{Hor}}P_VV.
    \end{equation}
    Applying (\ref{c}) to $U + V$ and $V$, we obtain
    \begin{equation*}
        G(U + V,U + V)G(\mbox{grad }b,P_VV) = G(V,V)G(\mbox{grad }b,P_{(U + V)}( U + V)).
    \end{equation*}
    Substituting $G(U + V ,U + V) = 2/\sigma^2$ and $G(V,V) = 1/\sigma^2$ to the equation above, we get
    \begin{equation*}
        2\mbox{Hor }P_VV = \mbox{Hor }P_{(U + V)}(U + V).
    \end{equation*}
    Hence, 
    \begin{equation*}
        \mbox{Hor }(P_UU + P_VV + 2 P_UV) = 2\mbox{Hor }P_VV.
    \end{equation*}
    Together with (\ref{lem5_1}), we get
    \begin{equation*}
        (\mbox{Hor}P_UV)_{(x,\sigma)} = 0.
    \end{equation*}
Combining this with (\ref{d}) and 
\begin{equation*}
    \frac{\|\mbox{grad }b\|^2}{b^2} = \frac{G(\mbox{grad }b,\mbox{grad }b)}{b^2} = \frac{G\left(G^{\sigma\sigma}\partial_\sigma\left(\frac{1}{\sigma}\right)\partial_\sigma,G^{\sigma\sigma}\partial_\sigma\left(\frac{1}{\sigma}\right)\partial_\sigma\right)}{\left(\frac{1}{\sigma^2}\right)} = \left(\frac{\sigma}{2n}\right)^2G(\partial_\sigma,\partial_\sigma) = \frac{1}{2n},
\end{equation*}
we have, for all $t > 0$,
    \begin{equation*}
        \frac{1}{2n}\frac{1}{\sigma^2}\frac{1}{\sigma^2} =G_{(x,\sigma)}(\mbox{Hor}P_VV,\mbox{Hor}P_UU)=  G_{(x,\sigma)}(\mbox{Hor}P_UU,\mbox{Hor}P_UU),
    \end{equation*}
    which proves the claim.
\end{proof}

We define $k,l$ by $D_XX = k(\sigma)\frac{\partial}{\partial\sigma}$ and $D^*_XX = l(\sigma)\frac{\partial}{\partial\sigma}$.
Combining $G(\partial_\sigma,\partial_\sigma) = \frac{2n}{\sigma^2}$ with 
   $ \partial_\sigma G(\partial_\sigma,\partial_\sigma) = G(D_{\partial_\sigma}\partial_\sigma,\partial_\sigma) + G(\partial_\sigma,D^*_{\partial_\sigma}\partial_\sigma)$,
we obtain
\begin{equation*}
    -\frac{4n}{\sigma^3} = \frac{2n}{\sigma^2}(k(\sigma) + l(\sigma)).
\end{equation*}
Hence,
\begin{equation}\label{star5}
    -\frac{2}{\sigma} = k(\sigma) + l(\sigma)
\end{equation}
holds.
\begin{theorem}\label{takano_2}
        Under Assumption \ref{assume_5}, the connection on the line is
        \begin{equation*}
            D_{\partial_\sigma}\partial_\sigma = \frac{1}{\sigma}\partial_\sigma \mbox{ or } D_{\partial_\sigma}\partial_\sigma = -\frac{3}{\sigma}\partial_\sigma.
        \end{equation*}
\end{theorem}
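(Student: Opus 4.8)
The plan is to run the argument of Theorem \ref{mainthm} again, now for the doubly warped metric of the Takano Gaussian space. The decisive difference from the cone is a sign: because the fibre warping $b(\sigma)$ is a constant multiple of $1/\sigma$, one has $Xb/b = -1/\sigma$ with $X = \partial_\sigma$, in contrast to $Xf/f = +1/t$ for the cone, where $f(t)=t$. First I would record the analogue of \eqref{alpha} for this metric. The derivations of Lemma \ref{3_2} and of \eqref{alpha} use only Assumption \ref{assume1}, the Koszul formula \eqref{starstar} and the commuting of lifts, all available under Assumption \ref{assume_5}; hence $P_X V$ is vertical for vertical $V$, and $D_X\partial_i = D_{\partial_i}X = (Xb/b)\partial_i + P_X\partial_i = -\tfrac{1}{\sigma}\partial_i + P_X\partial_i$, while its dual is $D^*_X\partial_i = -\tfrac{1}{\sigma}\partial_i - P_X\partial_i$.

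Next I would establish the analogue of Lemma \ref{lem4_3}, namely $P_X\partial_i = \tfrac{1}{\sigma}\partial_i$ or $P_X\partial_i = -\tfrac{1}{\sigma}\partial_i$, with the sign equal to that of $\mathrm{Hor}\,P_{\partial_i}\partial_i = \pm\tfrac{1}{2n\sigma}\partial_\sigma$ from Lemma \ref{prop5_4}. Since $P_X\partial_i$ is vertical, it suffices to show that it is parallel to $\partial_i$ and to pin down one coefficient. Parallelism follows from $G(P_X\partial_i,\partial_j) = G(X,P_{\partial_i}\partial_j) = G(X,\mathrm{Hor}\,P_{\partial_i}\partial_j) = 0$ for $j\neq i$, using \eqref{aa}, \eqref{bb} and the vanishing $\mathrm{Hor}\,P_{\partial_i}\partial_j = 0$ proved inside Lemma \ref{prop5_4}. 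The coefficient is then fixed by $G(P_X\partial_i,\partial_i) = G(X,\mathrm{Hor}\,P_{\partial_i}\partial_i) = \pm\tfrac{1}{\sigma^3}$ together with $G(\partial_i,\partial_i) = 1/\sigma^2$.

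Finally, with $V = \partial_i$ and $[V,X] = 0$, the curvature collapses to $R(V,X)X = D_V D_X X - D_X D_V X$, and similarly for $R^*$. I would split into the two cases of Lemma \ref{prop5_4}. Substituting $Xb/b = -1/\sigma$ produces the cancellation that is the crux of the proof: in the case $P_X\partial_i = +\tfrac{1}{\sigma}\partial_i$ one finds $D_V X = 0$ but $D^*_V X = -\tfrac{2}{\sigma}V$, so the $D$-curvature is uninformative and one must use $R^* = 0$; symmetrically, when $P_X\partial_i = -\tfrac{1}{\sigma}\partial_i$ one has $D^*_V X = 0$ and $D_V X = -\tfrac{2}{\sigma}V$, forcing the use of $R = 0$. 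In the first case, writing $D^*_X X = l(\sigma)\partial_\sigma$ and computing $D^*_V(D^*_X X) = -\tfrac{2l}{\sigma}V$ and $D^*_X(D^*_V X) = \tfrac{6}{\sigma^2}V$, the condition $R^*(V,X)X = 0$ gives $l = -3/\sigma$, and then \eqref{star5}, namely $k + l = -2/\sigma$, yields $k = 1/\sigma$. The second case is obtained by interchanging $D$ with $D^*$ and $k$ with $l$ and gives $k = -3/\sigma$. Together these are the two asserted connections (and, as a check, they coincide with the $\alpha = -1$ and $\alpha = +1$ connections $-\tfrac{1+2\alpha}{\sigma}\partial_\sigma$ of Subsection \ref{takano_gauss}). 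The main obstacle is exactly this degeneracy forced by the negative sign of $Xb/b$: one must keep track of which of $D$, $D^*$ still has a nonvanishing second fundamental form term and feed the correct flatness condition into the Gauss-type computation.
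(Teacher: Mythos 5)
Your proposal is correct and follows essentially the same route as the paper: both compute $G(R(V,X)X,V)$ and $G(R^*(V,X)X,V)$ via the O'Neill-type formula $D_VX=(Xb/b)V+P_XV=-\tfrac{1}{\sigma}V+P_XV$, feed in Lemma \ref{prop5_4}, and close with the duality relation $k+l=-2/\sigma$. The only (harmless) reorganization is that you pin down $P_X\partial_i=\pm\tfrac{1}{\sigma}\partial_i$ beforehand by the linear-algebra argument of Lemma \ref{lem4_3}, which renders one of the two flatness conditions vacuous, whereas the paper extracts $G(P_XV,P_XV)=1/\sigma^4$ from $R=0$ and then $l$ from $R^*=0$; the information used and the conclusion are identical.
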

\begin{proof}
    We only check the case of
    $\mbox{Hor}P_VV = \frac{1}{2n\sigma}\partial_\sigma$, because the other case of $\mbox{Hor}P_VV = -\frac{1}{2n\sigma}$ follows from the completely same argument.
    According to the O'Neill formula (\ref{alpha}) for affine connections, we have
    \begin{equation*}
        D_VX = D_XV = \frac{\partial_\sigma b}{b}V + P_VX = -\frac{1}{\sigma}V + P_XV.
    \end{equation*}
    Using this, we calculate $G(R^*(V,X)X,V)$. We have
    \begin{eqnarray*}
        G(D^*_VD^*_XX,V) &=& G\left(l(\sigma)\left\{-\frac{1}{\sigma}V-P_XV\right\},V\right)\nonumber\\
        &=& l(\sigma)\left\{-\frac{1}{\sigma}G(V,V) - G(X,P_VV)\right\}\nonumber\\
        &=& l(\sigma)\left(-\frac{1}{\sigma}\frac{1}{\sigma^2} - G\left(\partial_\sigma,\frac{1}{2n\sigma}\partial_\sigma\right)\right)\nonumber\\
        &=& l(\sigma)\left(-\frac{1}{\sigma^3} - \frac{1}{2n\sigma}\frac{2n}{\sigma^2}\right)\nonumber\\
        &=& l(\sigma)\left(-\frac{2}{\sigma^3}\right),
    \end{eqnarray*}
    and
    \begin{eqnarray*}
        G(D^*_XD^*_VX,V) &=& XG(D^*_VX,V) - G(D^*_VX,D_XV)\nonumber\\
        &=& \partial_\sigma G\left(-\frac{1}{\sigma}V-P_XV,V\right) - G\left(\frac{V}{\sigma},\frac{V}{\sigma}\right) + G(P_XV,P_XV)\nonumber\\
        &=& \partial_\sigma\left(-\frac{1}{\sigma}G(V,V)-G(X,P_VV)\right) - \frac{1}{\sigma^2}G(V,V) + G(P_XV,P_XV)\nonumber\\
        &=& \frac{5}{\sigma^4} + G(P_XV,P_XV).
    \end{eqnarray*}
    Since $R^* = 0$, we obtain 
    \begin{equation}\label{5thm_eq_1}
        l(\sigma)\left(-\frac{2}{\sigma^3}\right) = \frac{5}{\sigma^4} + G(P_XV,P_XV).
    \end{equation}
    Next, let us calculate $G(R(V,X)X,V)$. We have
    \begin{eqnarray*}
        G(D_VD_XX,V) &=& G\left(k(\sigma)(D_VX),V\right)\nonumber\\
        &=& k(\sigma)G\left(-\frac{1}{\sigma}V + P_XV,V\right)\nonumber\\
        &=& k(\sigma)\left(-\frac{1}{\sigma^3} + G(P_XV,V)\right)\nonumber\\
        &=& k(\sigma)\left(-\frac{1}{\sigma^3}  + \frac{1}{2n\sigma}\frac{2n}{\sigma^2}\right) = 0,
    \end{eqnarray*}
    and
    \begin{eqnarray*}
        G(D_XD_VX,V) &=& XG(D_VX,V) - G(D_VX,D^*_XV)\nonumber\\
        &=& \partial_\sigma G\left(-\frac{1}{\sigma}V + P_XV,V\right) - G\left(\frac{V}{\sigma},\frac{V}{\sigma}\right) + G(P_XV,P_XV)\nonumber\\
        &=& \partial_\sigma\left(-\frac{1}{\sigma}\frac{1}{\sigma^2} + G(X,P_VV)\right) - \frac{1}{\sigma^2}\frac{1}{\sigma^2} + G(P_XV,P_XV)\nonumber\\
        &=& \partial_\sigma\left(-\frac{1}{\sigma^3} + \frac{1}{\sigma^3}\right) - \frac{1}{\sigma^4} + G(P_XV,P_XV) = -\frac{1}{\sigma^4} + G(P_XV,P_XV).
    \end{eqnarray*}
    Since $R = 0$, we have
    \begin{equation*}
       0 = -\frac{1}{\sigma^4} + G(P_XV,P_XV).
    \end{equation*}
    Combining this with (\ref{5thm_eq_1}) and (\ref{star5}), we obtain 
    \begin{equation*}
        l(\sigma)= -\frac{3}{\sigma},\quad k(\sigma) = \frac{1}{\sigma}.
    \end{equation*}
    The other case is shown in the same way.
\end{proof}
Note that these connections coincide with the $\alpha $-connections at $\alpha = \pm{1}$ in the Takano Gaussian space (recall Subsection 2.3).

\section{Discussion: Wasserstein Gaussian space}
By \emph{Wasserstein Gaussian space}, we mean the set of multivariate Gaussian distributions on $\mathbb{R}^n$ with mean zero equipped with the $L^2$-Wasserstein metric.
When we started investigating warped products in information geometry, we thought that we would be able to find dually flat connections on the Wasserstein Gaussian space and calculate its canonical divergence. 
The scenario we thought was the following. In \cite{takatsu}, it is proved that the Wasserstein Gaussian space has a cone structure. Recently in \cite{fujiwara2}, it is proved that we can find dually flat connections on
the space of density matrices equipped with the monotone metric. 
We can apply this result because the SLD metric and the Wasserstein metric on Gaussian distributions are essentialy the same on $\mathcal{D}$ \cite{bures}.
We thought that once we study dually flat affine connections on warped products, we would be able to extend the dually flat connections on the fiber space to the warped product in a natural way.
However, it turned out that it is difficult to draw dual affine coordinates of the dually flat connections on warped products we made. Thus, we do not know how to calculate the canonical divergence.
Let us explain the difficulty in this section.

In the previous sections, we discussed necessary conditions for warped products and fiber spaces to be dually flat. 
First, we show that it is also a sufficient condition for the Wasserstein Gaussian space. The question is, when we extend connections on the fiber space to the warped product, whether the warped product with those connections becomes dually flat or not.

According to the arguments in Section 4, we now define the connection $D$ on a cone $M = \mathbb{R}_{>0}\times_{f} F$, where $f(t) = t$ and $(F,g_F)$ is a Riemannian manifold. We consider the situation that the fiber space is equipped with a dually flat affine connection $\widetilde{\nabla}$.
    Let $G := g_B + f^2g_F$ be the warped product metric on $M$.
    Denote the base space by $(\mathbb{R}_{>0},g_B)$ with a coordinate $\{t\in\mathbb{R}_{>0}\}$ such that $g_B(\frac{\partial}{\partial t},\frac{\partial}{\partial t}) = 1$.
    Let $X:= \frac{\partial}{\partial t}$ and $\{U_i\}_{i=1}^n\subset\mathcal{L}(F)$ be a basis of $\mathcal{L}(F)$ such that $[U_i,U_j] = 0$ for any $i,j\in\{1,\ldots,n\}$. Following Theorem \ref{mainthm} and Lemma \ref{lem4_3}, define the connection $D$ by
    \begin{equation*}
        D_XX = \frac{1}{t}\frac{\partial}{\partial t},\quad  D_XV = D_VX := \frac{2}{t}V,\quad
        \begin{cases}
            \mbox{Hor } D_VW := 0,\\
            \mbox{Ver } D_VW := \mbox{Lift} (\widetilde{\nabla}_VW),
        \end{cases}
    \end{equation*}
    where $V,W$ are arbitrary vectors in $\{U_i\}_{i=1}^n$.

\begin{proposition}
    $(M,D,D^*)$ is a dually flat space.
\end{proposition}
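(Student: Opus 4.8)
The plan is to produce the dual connection $D^{*}$ explicitly, verify that $D$ and $D^{*}$ are both torsion free and mutually dual, and then check that the curvature $R$ of $D$ vanishes; the vanishing of the curvature $R^{*}$ of $D^{*}$ will then follow for free from Proposition \ref{curvature_prop}. To handle the torsion and duality cleanly, let $\nabla$ be the Levi-Civita connection of $G$ and put $P := D - \nabla$. Inserting the standard warped-product formulas for $\nabla$ (here $\nabla_{X}X = 0$, $\nabla_{X}V = \frac{1}{t}V$, $\mathrm{Ver}\,\nabla_{V}W = \mathrm{Lift}(\nabla^{F}_{V}W)$ and $\mathrm{Hor}\,\nabla_{V}W = -\frac{G(V,W)}{t}X$, where $\nabla^{F}$ is the Levi-Civita connection of $(F,g_{F})$) into the definition of $D$ gives the explicit expressions $P_{X} = \frac{1}{t}\mathrm{Id}$ and $P_{V}W = \mathrm{Lift}(P^{F}_{V}W) + \frac{G(V,W)}{t}X$, where $P^{F} := \widetilde{\nabla} - \nabla^{F}$ is the analogous tensor on the fiber.

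Next I would record that, because $(F,g_{F},\widetilde{\nabla},\widetilde{\nabla}^{*})$ is dually flat, $P^{F}$ satisfies the fiber analogues of (\ref{aa}) and (\ref{bb}); a direct check then transfers these identities to $P$, i.e. $P_{Y}Z = P_{Z}Y$ and $G(P_{Y}Z,W) = G(Z,P_{Y}W)$ for all $Y,Z,W$. The first identity makes $D = \nabla + P$ torsion free and, setting $D^{*} := \nabla - P$, also makes $D^{*}$ torsion free. The second identity is exactly what is needed to see that $D^{*}$ is the $G$-dual of $D$: since $\nabla$ is metric, $XG(Y,Z) = G(\nabla_{X}Y,Z) + G(Y,\nabla_{X}Z) = G(D_{X}Y,Z) + G(Y,D^{*}_{X}Z)$, the cross terms cancelling by self-adjointness of $P$. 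In particular $(D+D^{*})/2 = \nabla$, so all the dually-flat prerequisites except flatness are in place.

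It remains to show $R = 0$, and here I would use that $R$ is a tensor and evaluate it in a convenient frame. Since $(F,\widetilde{\nabla})$ is flat and torsion free, there exist $\widetilde{\nabla}$-affine coordinates $(y^{i})$ on $F$; choosing the commuting lifts $U_{i} := \partial/\partial y^{i}$ as the distinguished frame in the construction reduces the defining data to $D_{X}X = \frac{1}{t}X$, $D_{X}U_{i} = D_{U_{i}}X = \frac{2}{t}U_{i}$, $D_{U_{i}}U_{j} = 0$, with $[X,U_{i}] = [U_{i},U_{j}] = 0$. The components $R(U_{i},U_{j})U_{k}$, $R(U_{i},U_{j})X$ and $R(X,U_{i})U_{j}$ then vanish at once, since $D_{U_{i}}U_{j} = 0$ and $U_{i}$ annihilates functions of $t$; the only component requiring a cancellation is
\begin{equation*}
R(X,U_{i})X = D_{X}\!\left(\tfrac{2}{t}U_{i}\right) - D_{U_{i}}\!\left(\tfrac{1}{t}X\right) = \tfrac{2}{t^{2}}U_{i} - \tfrac{2}{t^{2}}U_{i} = 0 .
\end{equation*}
Thus $R = 0$, whence $R^{*} = 0$ by Proposition \ref{curvature_prop}, and $(M,D,D^{*})$ is dually flat. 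I expect the genuine care to lie not in the curvature computation, which is short in affine coordinates, but in the torsion/duality bookkeeping: one must be sure the metric dual $D^{*}$ is itself torsion free, and this rests entirely on the symmetry of $P$ inherited from the dual flatness of the fiber.
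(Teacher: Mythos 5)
Your proof is correct, but it follows a genuinely different route from the paper's. The paper takes the torsion-freeness and duality of $(D,D^{*})$ for granted (its proof opens with ``we only have to check that the curvature vanishes'') and then verifies $R=0$ component by component in an \emph{arbitrary} commuting frame $\{U_i\}$: the vertical block $G(R(U,V)W,Q)$ is handled through the Gauss equation (\ref{gauss1}) together with the vanishing of $G(I\hspace{-.1em}I(V,W),I\hspace{-.1em}I^{*}(U,Q))$, and the mixed component $G(R(X,U)V,Q)$ is reduced to the duality relation $Ug_F(V,Q)=g_F(\widetilde{\nabla}_UV,Q)+g_F(V,\widetilde{\nabla}^{*}_UQ)$ on the fiber. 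You instead write $D=\nabla+P$ with $\nabla$ the Levi-Civita connection, derive torsion-freeness and $G$-duality of $D^{*}=\nabla-P$ from the symmetry and self-adjointness of $P$ (inherited from $P^{F}=\frac{\widetilde{\nabla}-\widetilde{\nabla}^{*}}{2}$ via Proposition \ref{levi}), and then kill the curvature in a $\widetilde{\nabla}$-affine frame where $D_{U_i}U_j=0$, so that only $R(X,U_i)X$ requires an actual cancellation. Your version is more complete in that it explicitly establishes the prerequisites (torsion-freeness, duality, $(D+D^{*})/2=\nabla$) that the paper leaves implicit, and the flatness computation is shorter; the paper's version stays inside the second-fundamental-form machinery of Sections 3--4 and never needs to pass to affine coordinates on $F$. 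The one point you should make explicit is that the defining relations ($D_XX=\frac{1}{t}X$, $D_XV=D_VX=\frac{2}{t}V$, $\mathrm{Hor}\,D_VW=0$, $\mathrm{Ver}\,D_VW=\mathrm{Lift}(\widetilde{\nabla}_VW)$) are invariant under $F$-function-linear changes of the commuting frame, so the connection does not depend on which frame is used in the construction and specializing to the $\widetilde{\nabla}$-affine coordinate frame is legitimate; this is a one-line check, not a gap.
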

\begin{proof}
    We only have to check that the curvature vanishes with respect to $D$. Let $U,V,W,Q$ be arbitrary vectors in $\{U_i\}_{i=1}^n$.
    Note that we have
    \begin{eqnarray}\label{last1}
        \mbox{Hor}D_UV  = 0,
    \end{eqnarray}
    and
    \begin{equation}\label{last2}
        D^*_XU = D^*_UX  = \frac{1}{t}U - \frac{1}{t}U = 0.
    \end{equation}
    We first check that $R(U,V)W$ vanishes.
    From (\ref{last1}) and (\ref{last2}), we have
    \begin{eqnarray*}
        G(R(U,V)W,X) &=& G(D_UD_VW,X) - G(D_VD_UW,X)\nonumber\\
        &=& \left\{UG(D_VW,X) - G(D_VW,D^*_UX)\right\} - \left\{VG(D_UW,X) - G(D_UW,D^*_VX)\right\}\nonumber\\
        &=& UG(\mbox{Hor}D_VW,X) - VG(\mbox{Hor}D_UW,X) = 0.
    \end{eqnarray*}
    Recall from Subsection 4.1 and (\ref{beta}) that
    \begin{eqnarray*}
        G(I\hspace{-.1em}I(V,W),I\hspace{-.1em}I^*(U,Q)) &=& \frac{\|\mbox{grad}f\|^2}{f^2}G(V,W)G(U,Q) - G(\mbox{Hor}(P_VW),\mbox{Hor}(P_UQ))  \nonumber\\
        &&\qquad+ \frac{G(V,W)G(\mbox{grad}f,P_UQ)}{f} - \frac{G(U,Q)G(\mbox{grad}f,P_VW)}{f}\nonumber\\
        &=& \frac{1}{t^2}G(V,W)G(U,Q) - G\left(\frac{G(V,W)}{t}\frac{\partial}{\partial t},\frac{G(U,Q)}{t}\frac{\partial}{\partial t}\right)  \nonumber\\
        &&\qquad+ \frac{1}{t}G(V,W)G\left(\frac{\partial}{\partial t},\frac{G(U,Q)}{t}\frac{\partial}{\partial t}\right) - \frac{1}{t}G(U,Q)G\left(\frac{\partial}{\partial t},\frac{G(V,W)}{t}\frac{\partial}{\partial t}\right)\nonumber\\
        &=& 0,
    \end{eqnarray*}
    thus we have $G(R(U,V)W,Q) = G(^FR(U,V)W,Q) = 0$ by (\ref{gauss1}). Hence, we have $R(U,V)W = 0$.
    
    Next, we check that $R(X,U)V$ vanishes. From (\ref{last1}) and (\ref{last2}), we have
    \begin{eqnarray*}
        G(R(X,U)V,X) &=& G(D_XD_UV- D_UD_XV,X)\nonumber\\
        &=& \{XG(D_UV,X) - G(D_UV,D^*_XX)\} - UG(D_XV,X) \nonumber\\
        &=& -UG\left(\frac{2}{t}V,X\right) = 0.
    \end{eqnarray*}
    We also have
    \begin{eqnarray*}
        G(R(X,U)V,Q) &=& \{XG(D_UV,Q) - G(D_UV,D^*_XQ)\} - \{UG(D_XV,Q) - G(D_XV,D^*_UQ)\}\nonumber\\
        &=& 2t\{g_F(\widetilde{\nabla}_UV,Q) - Ug_F(V,Q) + g_F(V,\widetilde{\nabla}^*_UQ)\}\nonumber\\
        &=& 0.
    \end{eqnarray*}
    Hence, $R(X,U)V = 0$. In a similar way, we can check $R(U,X)X = 0$.
\end{proof}

\begin{remark}
    We denote the Wasserstein Gaussian space over $\mathbb{R}^n$ by the \emph{$n\times n$ Wasserstein Gaussian space} since its elements are represented by $n\times n$ covariance matrices.
    For the $2\times 2$ Wasserstein Gaussian space, we remark that the existence of a dually flat affine connection $\widetilde{\nabla}$ is guaranteed by \cite{fujiwara2}.
\end{remark}

Thus the above proposition implies that we can furnish the $2\times 2$ Wasserstein Gaussian space with dually flat affine connections.
Though we think it necessary to draw dual affine coordinates to calculate the canonical divergence, it turned out to be difficult.
This is because, for example, $D_XU$ does not vanish, which means that the trivial extension of affine coordinates on the fiber space does not give affine coordinates of the warped product.
Here, trivial extension means $\{t,\xi_1,\ldots,\xi_n\}$ for affine coordinates $\{\xi_1,\cdots,\xi_n\}$ of the fiber space and the coordinate $\{t\}$ of the line.

\begin{remark}\label{finsler1}
In \cite{tayebi}, it is claimed that there is no dually flat proper doubly warped Finsler manifolds.
Let us restrict their argument to Riemannian manifolds.
For two manifolds $M_1$ and $M_2$ and their doubly warped product $(M_1\times M_2,G)$,
let $(x_i)$ and $(u_\alpha)$ be coordinates of $M_1$ and $M_2$, respectively.
Then their claim asserts that the coordinates $((x_i),(u_\alpha))$ on $(M_1\times M_2,G)$ cannot be affine coordinates for any dually flat connections on $M_1\times M_2$ unless $G$ is the product metric.
\end{remark}
For further understanding the relation between affine coordinates and their connections, let us observe the $2\times 2$ BKM cone (recall Subsection 4.4). 
Let $\bar{\nabla}$ be an affine connection whose affine coordinate is $\{a,b,c,d\}$, with which $2\times 2$ matrices are expressed as
\begin{equation*}
    \begin{pmatrix}
        a & c + ib\\
        c -ib & d
    \end{pmatrix}.
\end{equation*}
This $\bar{\nabla}$ is a dually flat affine connection on the BKM cone.
Let $\bar{D}$ be an affine connection whose affine coordinate is $\{t,\alpha,\beta,\gamma\}$, with which $2\times 2$ matrices are expressed as
\begin{equation*}
    t\begin{pmatrix}
        \alpha & \beta + i\gamma\\
        \beta - i\gamma& 1-\alpha
    \end{pmatrix}.
\end{equation*}
Relations of these coordinates are 
\begin{equation*}
    t = a + d,\quad\alpha = \frac{a}{a + d}, \quad1-\alpha = \frac{d}{a + d},
\end{equation*}
\begin{equation*}
    \frac{\partial}{\partial\alpha} = \begin{pmatrix}
        t & 0\\
        0 & -t
    \end{pmatrix} = (a+d)\left(\frac{\partial}{\partial a} - \frac{\partial}{\partial d}\right),
\end{equation*}
\begin{equation*}
    \frac{\partial}{\partial t} = \begin{pmatrix}
        \alpha & 0\\
        0 & 1-\alpha
    \end{pmatrix} = \frac{a}{a + d} \frac{\partial}{\partial a} + \frac{d}{a + d}\frac{\partial}{\partial d}.
\end{equation*}
Using these relations, we calculate
\begin{eqnarray*}
    \bar{\nabla}_{\frac{\partial}{\partial\alpha}}\frac{\partial}{\partial t} &=& \bar{\nabla}_{(a + d)\left(\frac{\partial}{\partial a}-\frac{\partial}{\partial d}\right)}\left(\frac{a}{a + d} \frac{\partial}{\partial a} + \frac{d}{a + d}\frac{\partial}{\partial d}\right)\nonumber\\
    &=& (a+d)\left(\frac{\partial}{\partial a}-\frac{\partial}{\partial d}\right)\left(\frac{a}{a + d}\right)\frac{\partial}{\partial a} + (a+d)\left(\frac{\partial}{\partial a}-\frac{\partial}{\partial d}\right)\left(\frac{d}{a + d}\right)\frac{\partial}{\partial d}\nonumber\\
    &=& (a+d)\left(\frac{d}{(a + d)^2}+\frac{a}{(a+d)^2}\right)\frac{\partial}{\partial a} + (a+d)\left(-\frac{d}{(a+d)^2}-\frac{a}{(a+d)^2}\right)\frac{\partial}{\partial d}\nonumber\\
    &=& \frac{\partial}{\partial a} -\frac{\partial}{\partial d}\neq 0.
\end{eqnarray*}
On the other hand
\begin{equation*}
    \bar{D}_{\frac{\partial}{\partial\alpha}}\frac{\partial}{\partial t} = 0.
\end{equation*}
Hence, $\bar{\nabla}$ and $\bar{D}$ are different.
\\
\section{Appendix}
Main contributions of this appendix are following two points.
\begin{itemize}
    \item We study an example of warped product whose dually flat connections are not realized as ($\pm{1}$)-connections of $\alpha$-connections. 
    \item We study dually flat connections compatible with the structure of a two-dimensional warped product.
\end{itemize}
\subsection{Preliminaries for elliptic distributions}
As described in \cite{elliptic}, a $p$-dimensional random variable $X$ is said to have an elliptic distribution with parameters $\mu^{\mathrm{T}} = (\mu_1,\cdots, \mu_p)$ and $\Psi$, a $p\times p$ positive definite matrix, if its density is 
\begin{equation*}
    p_h(x|\mu,\Psi) = \frac{h\{(x-\mu)^{\mathrm{T}}\Psi^{-1}(x-\mu)\}}{\sqrt{\det\Psi}}
\end{equation*}
for some function $h$. We say that $X$ has an $EL_p^h(\mu,\Psi)$ distribution.

We consider the class of one-dimensional elliptic distributions $EL_1^h(\mu,\sigma^2)$, where $\theta = (\mu,\sigma)$. We set $Z$ as an $EL_1^h(0,1)$ random variable and $ W = \{d \log h(Z^2)\}/d(Z^2)$.
We also set 
\begin{equation*}
    a = E(Z^2W^2),\quad b = E(Z^4W^2), \quad d = E(Z^6W^3).
\end{equation*}

The Fisher metric of elliptic distributions is
\begin{eqnarray}\label{fisher}
    ds^2 =  \frac{4a d\mu^2 + (4b-1)d\sigma^2}{\sigma^2}.
\end{eqnarray}
We denote the Fisher metric $ds^2$ as $G_F$.

\begin{example}
    Gaussian distribution, Student's t distribution and Cauchy distributions are examples of elliptic distributions. Their constants are given in Table \ref{table:data_type}, which is calculated in \cite{elliptic}.
    \begin{table}[hbtp]
        \caption{Important constants}
        \label{table:data_type}
        \centering
        \begin{tabular}{lccc}
          \hline
            & Gauss  & Cauchy   &  Student's t  \\
          \hline \hline
          $a$  & $\frac{1}{4}$ & $\frac{1}{8}$ & $\frac{k + 1}{4(k + 3)}$ \\
          $b$  & $\frac{3}{4}$   & $\frac{3}{8}$ & $\frac{3(k + 1)}{4(k + 3)}$\\
          $d$  & $\frac{-15}{8}$  & $-\frac{5}{16}$  & $-\frac{15(k + 1)^2}{8(k + 3)(k + 5)}$\\
          \hline
        \end{tabular}
      \end{table}
\end{example}
\subsection{Elliptic distributions as warped products}
We set
\begin{equation*}
    t := \sqrt{4b-1}\log\sigma.
\end{equation*}
Since
\begin{equation*}
    G_F\left(\frac{\partial}{\partial t},\frac{\partial}{\partial t}\right) = G_F\left(\frac{\partial\sigma}{\partial t}\frac{\partial}{\partial \sigma},\frac{\partial\sigma}{\partial t}\frac{\partial}{\partial \sigma}\right) = \frac{\sigma^2}{4b-1}G_F\left(\frac{\partial}{\partial\sigma},\frac{\partial}{\partial\sigma}\right) = 1, 
\end{equation*}
we have
\begin{equation*}
    G_F = dt^2 + f(t)^2d\mu^2,
\end{equation*}
where
\begin{equation*}
    f(t) := \sqrt{4a}\exp\left(-\frac{t}{\sqrt{4b-1}}\right).
\end{equation*}

\subsection{Calculations of $R(V,X,X,V)$}
For the parameter spaces $M$ of elliptic distributions, we set new assumptions.
\begin{assume}\label{assume_appendix}
    Let $B = \mathbb{R}_{>0}$ with the Euclidean metric $g_B$ such that $g\left(\frac{\partial}{\partial t},\frac{\partial}{\partial t}\right) = 1$, $f(t) = \sqrt{4a}\exp\left(-\frac{t}{\sqrt{4b-1}}\right)$ and $F = \mathbb{R}$ with the Euclidean metric
     and $\widetilde{\nabla},\widetilde{\nabla}^*$ be a dually flat affine connections on $\mathbb{R}$ with the Euclidean metric $g_\mu$ such that $g_\mu\left(\frac{\partial}{\partial\mu},\frac{\partial}{\partial\mu}\right) = 1$. For an arbitrary connection $D$ on $B\times_f F$, we assume that $D$ satisfies
     \begin{itemize}
        \item $D_XY \mbox{ is horizontal,  } i.e. ,\, D_XY\in\mathcal{L}(B)$ for any $X,Y\in\mathcal{L}(B)$,
        \item $\mbox{Ver }(D_VW) = \mbox{Lift }(\widetilde{\nabla}^F_VW)$ for any $V,W\in\mathcal{L}(F)$.
    \end{itemize}
    We also assume $R = 0$ where $R$ is the curvature of $M$ with respect to $D$.
\end{assume}
We next calculate the curvature $R$ under Assumption \ref{assume_appendix}. We set the notations $X = \frac{\partial}{\partial t}, V = \frac{\partial}{\partial \mu}$ and $k,l$ as 
\begin{equation*}
    D_{\frac{\partial}{\partial t}}\frac{\partial}{\partial t} = k(t)\frac{\partial}{\partial t}
\end{equation*}
and 
\begin{equation*}
    P_XV = l(t,\mu)\frac{\partial}{\partial \mu},
\end{equation*}
where $P_XV := \frac{1}{2}(D_XV - D^*_XV)$.
Since 
\begin{equation*}
    \frac{Xf}{f} = \frac{\sqrt{4a}\left(-\frac{1}{\sqrt{4b-1}}\right)\exp\left(-\frac{t}{\sqrt{4b-1}}\right)}{\sqrt{4a}\exp\left(-\frac{t}{\sqrt{4b-1}}\right)} = -\frac{1}{\sqrt{4b-1}},
\end{equation*}
we have
\begin{eqnarray*}
    G_F( D_VD_XX,V) &=& k(t)\left\{-\frac{1}{\sqrt{4b-1}}f(t)^2 + l(t,\mu)f(t)^2\right\},
\end{eqnarray*}
\begin{eqnarray*}
    G_F( D^*_VD^*_XX,V) &=& -k(t)\left\{ -\frac{1}{\sqrt{4b-1}}f(t)^2 - l(t,\mu)f(t)^2\right\},
\end{eqnarray*}
\begin{eqnarray*}
    G_F( D_XD_VX,V) &=& XG_F( \frac{Xf}{f}V + P_XV,V) - G_F( D_XV,D^*_XV)\\
    &=& \frac{\partial}{\partial t}\left\{\left(-\frac{1}{\sqrt{4b-1}}f(t)^2 + l(t,\mu)f(t)^2\right)\right\} - \left\{\left(\frac{Xf}{f}\right)^2G_F( V,V)- G_F( P_XV,P_XV)\right\}\\
    &=& f(t)^2\left\{\frac{1}{4b-1} + \partial_t l  - \frac{2l}{\sqrt{4b-1}} + l^2\right\}
\end{eqnarray*}
and
\begin{eqnarray*}
    G_F (D^*_XD^*_VX, V ) &=& f^2\left\{\frac{1}{4b-1} - \partial_tl + \frac{2l}{\sqrt{4b-1}} + l^2\right\}.
\end{eqnarray*}
Hence, we have
\begin{eqnarray*}
    R(V,X,X,V) &=& G_F( D_VD_XX,V) - G_F( D_XD_VX , V)\\
    &=& f(t)^2\left\{-\frac{k(t)}{\sqrt{4b-1}} + kl - \frac{1}{4b-1} - \partial_t l + \frac{2l}{\sqrt{4b-1}} - l^2\right\}
\end{eqnarray*}
and
\begin{eqnarray*}
    R^*(V,X,X,V) &=& G_F( D^*_VD^*_XX,V)- G_F( D^*_XD^*_VX, V )\\
    &=& f(t)^2\left\{ \frac{k(t)}{\sqrt{4b-1}}  + kl - \frac{1}{4b-1} + \partial_t l - \frac{2l}{\sqrt{4b-1}} - l^2\right\}.
\end{eqnarray*}
Since we now consider the case $R = R^* = 0$, we have
\begin{eqnarray*}
        &&f(t)^2\left\{kl - \frac{1}{4b-1} - l^2\right\} = 0,\\
        &&f(t)^2\left\{-\frac{k(t)}{\sqrt{4b-1}} - \partial_t l + \frac{2l}{\sqrt{4b-1}}\right\} = 0.
\end{eqnarray*}
 Summarizing the above arguments, we have the following theorem.
\begin{theorem}\label{constant_connection}
    For any $\gamma\in\mathbb{R}$, we define affine connections $D, D^*$ on $M$ by
\begin{eqnarray*}
    &&D_{\frac{\partial}{\partial t}}\frac{\partial}{\partial t} = k\frac{\partial}{\partial t},\\
    &&D_{\frac{\partial}{\partial \mu}}\frac{\partial}{\partial\mu} = \nabla_{\frac{\partial}{\partial t}}\frac{\partial}{\partial\mu} + l\frac{\partial}{\partial\mu},\\
    &&D_{\frac{\partial}{\partial\mu}}\frac{\partial}{\partial\mu} = \nabla_{\frac{\partial}{\partial\mu}}\frac{\partial}{\partial\mu} + \gamma\frac{\partial}{\partial\mu} + \frac{l}{2\sigma^2}\frac{\partial}{\partial t},
\end{eqnarray*}
where $\nabla$ is the Levi-Civita connection and $k,l$ are arbitrary functions satisfying
\begin{eqnarray}\label{difeq}
    \begin{cases}
        kl - \frac{1}{4b-1} - l^2 = 0,\\
        -\frac{k(t)}{\sqrt{4b-1}} - \partial_t l + \frac{2l}{\sqrt{4b-1}} = 0.
    \end{cases}
\end{eqnarray}
Then $(M,D,D^*)$ satisfies Assumption \ref{assume_appendix}.
\end{theorem}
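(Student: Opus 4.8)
The plan is to check, one by one, the three ingredients bundled into Assumption~\ref{assume_appendix}: that $D_XY$ is horizontal, that $\mathrm{Ver}(D_VW)=\mathrm{Lift}(\widetilde{\nabla}^F_VW)$, and that the curvature $R$ of $D$ vanishes. The first two are structural and follow almost at once from the definition of $D$; all the content sits in the flatness condition. The decisive simplification I would use is that $M=B\times_f F$ is two-dimensional with $F=\mathbb{R}$, and that $[X,V]=0$ for $X=\frac{\partial}{\partial t}$ and $V=\frac{\partial}{\partial\mu}$. Therefore the entire curvature tensor is encoded in the two vector fields $R(X,V)X=D_XD_VX-D_VD_XX$ and $R(X,V)V=D_XD_VV-D_VD_XV$, and it suffices to prove that both vanish.

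For the structural part, horizontality is immediate since $D_XX=k\,X\in\mathcal{L}(B)$. For the vertical condition I would first note that, because $F=\mathbb{R}$ is one-dimensional and flat, the Levi-Civita piece $\nabla_VV=\tfrac{f^2}{\sqrt{4b-1}}X$ is purely horizontal; hence $\mathrm{Ver}(D_VV)=\gamma V$. Defining $\widetilde{\nabla}^F$ on $F$ by $\widetilde{\nabla}^F_{\partial/\partial\mu}\partial/\partial\mu=\gamma\,\partial/\partial\mu$ then yields $\mathrm{Ver}(D_VV)=\mathrm{Lift}(\widetilde{\nabla}^F_VV)$, and since every affine connection on a one-dimensional manifold is automatically torsion free with vanishing curvature, $(\widetilde{\nabla}^F,\widetilde{\nabla}^{F*})$ is dually flat, as Assumption~\ref{assume_appendix} demands.

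For the curvature I would compute the two components explicitly from $D_XX=kX$, from $D_XV=D_VX=\bigl(l-\tfrac{1}{\sqrt{4b-1}}\bigr)V$ (the O'Neill formula \eqref{alpha} with $P_XV=lV$), and from the given expression for $D_VV$. A direct calculation shows that $R(X,V)X$ is purely vertical, with $G(R(X,V)X,V)$ proportional to the combination $km-m'-m^2$, where $m=l-\tfrac{1}{\sqrt{4b-1}}$; this combination is exactly the sum of the two left-hand sides of \eqref{difeq}, so it vanishes, and $R(X,V)X=0$. Dually, $R(X,V)V$ comes out purely horizontal, its coefficient being $\partial_t h+h(k-m)$ with $h$ the horizontal coefficient of $D_VV$; here I would insert the first relation of \eqref{difeq} to collapse the purely algebraic terms and then the second relation to cancel the remaining $\partial_t l$ term, so that this coefficient vanishes as well. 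With $R(X,V)X=0$ and $R(X,V)V=0$ and $[X,V]=0$ we get $R=0$; Proposition~\ref{curvature_prop} then gives $R^*=0$ for free, although only $R=0$ is needed.

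The step I expect to be the real obstacle is the horizontal component $R(X,V)V$. Whereas the vanishing of $R(X,V)X$ is literally the sum of the two equations in \eqref{difeq}, the coefficient $\partial_t h+h(k-m)$ entangles the warping function $f$, its $t$-derivative, and the second-fundamental-form term $\mathrm{Hor}(P_VV)$ carried inside $D_VV$. The algebra only collapses to a multiple of the second equation of \eqref{difeq} if this horizontal term is precisely the one forced by the symmetries \eqref{aa} and \eqref{bb}, that is, if $G(\mathrm{Hor}(P_VV),X)=G(V,P_XV)$. Accordingly I would pin down $\mathrm{Hor}(P_VV)$ from these identities before differentiating, and then check that, after substituting \eqref{difeq}, the $f$-dependent terms cancel identically rather than only for special warping functions. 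Establishing that this cancellation is genuine is the delicate point, and it is exactly what pins the admissible line connection to the solution set of the system \eqref{difeq}.
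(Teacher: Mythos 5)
Your proposal is correct and rests on the same computational core as the paper --- expressing the curvature in the frame $\{X,V\}$ and reducing its vanishing to the system \eqref{difeq} --- but it is actually more complete than what the paper records. The paper's proof of Theorem~\ref{constant_connection} consists solely of the preceding computation of $G(R(V,X)X,V)$ and $G(R^*(V,X)X,V)$ followed by ``summarizing the above arguments''; it never states that $R(X,V)X$ is purely vertical and $R(X,V)V$ purely horizontal (which is what makes those two scalars control the entire curvature endomorphism in dimension two), nor does it verify the structural clauses of Assumption~\ref{assume_appendix}. You supply exactly these missing observations. Your identification of the vertical coefficient of $R(X,V)X$ with the sum of the two equations of \eqref{difeq} (writing $m=l-\tfrac{1}{\sqrt{4b-1}}$) checks out, and the horizontal coefficient $\partial_t h+h(k-m)$ of $R(X,V)V$ does cancel under \eqref{difeq} once $h=G(\mathrm{Hor}(D_VV),X)$ is taken to be $\tfrac{f^2}{\sqrt{4b-1}}+lf^2$. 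Your caution about that horizontal term is also well placed: the value forced by \eqref{aa} and \eqref{bb} is $G(\mathrm{Hor}(P_VV),X)=l\,G(V,V)=lf^2=\tfrac{4al}{\sigma^2}$, which agrees with the coefficient $\tfrac{l}{2\sigma^2}$ printed in the theorem only when $4a=\tfrac12$; with your normalization the cancellation is genuine for every warping function of the stated exponential form, so your route in fact repairs a small inconsistency in the statement rather than merely reproducing the paper's argument.
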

\begin{remark}
    In Section 5, we consider Takano Gaussian space $(L^{n + 1},G_T,\nabla^{(\alpha)})$. 
    It was shown in Lemma \ref{prop5_4} and Theorem \ref{takano_2} that the dually flat connections $D,D^*$ on $(L^{n + 1},G_T)$ compatible with warped product structure satisfy following two equations:
    \begin{equation}\label{takano_connection_1}
        \mbox{Hor} P_{\frac{\partial}{\partial m_i}}\frac{\partial}{\partial m_i} = \frac{1}{2n\sigma}\frac{\partial}{\partial\sigma} \quad \mbox{or} \quad -\frac{1}{2n\sigma}\frac{\partial}{\partial\sigma},
    \end{equation}
    \begin{equation}\label{takano_connection_2}
        D_{\frac{\partial}{\partial\sigma}}\frac{\partial}{\partial\sigma} = \frac{1}{\sigma}\frac{\partial}{\partial \sigma} \quad  \mbox{or}\quad -\frac{3}{\sigma}\frac{\partial}{\partial\sigma}.
    \end{equation}
    Since the Fisher metric $G_T$ of Takano Gaussian space is expressed as
    \begin{equation*}
        G_T = \frac{dm_1^2 + \cdots + dm_n^2 + 2nd\sigma^2}{\sigma^2},
    \end{equation*}
    if we set the parameter $t = \sqrt{2n}\log \sigma $, we have
    \begin{equation*}
        G_T\left(\frac{\partial}{\partial t},\frac{\partial}{\partial t}\right) = G_T\left(\frac{\partial\sigma}{\partial t}\frac{\partial}{\partial\sigma},\frac{\partial\sigma}{\partial t}\frac{\partial}{\partial \sigma}\right) = \frac{\sigma^2}{2n}\frac{2n}{\sigma^2} = 1.
    \end{equation*}
    Hence, the metric is
    \begin{equation*}
        G_T = dt^2 + f_T(t)^2 \{dm_1^2 + \cdots + dm_n^2\},
    \end{equation*}
    where
    \begin{equation*}
        f_T(t) = \exp\left(-\frac{t}{\sqrt{2n}}\right).
    \end{equation*}
    From (\ref{takano_connection_1}) and (\ref{takano_connection_2}), we have
    \begin{equation*}
        k = \sqrt{\frac{2}{n}},\quad l = \frac{1}{\sqrt{2n}}.
    \end{equation*}
    This is the only solution of (\ref{difeq}) if $l$ is constant when $n = 1, a = \frac{1}{4}$ and $ b = \frac{3}{4}$.
\end{remark}
\subsection{$\alpha$-connections and dually flat connections}
Using calculations for $\alpha$-connections on elliptic distributions $\nabla^{(\alpha)}$ in \cite{elliptic}, we have 
\begin{eqnarray*}
    \nabla^{(\alpha)}_{\frac{\partial}{\partial\sigma}}\frac{\partial}{\partial\sigma} &=& \frac{1-4b + \alpha(6b + 4d - 1)}{(4b-1)\sigma}\frac{\partial}{\partial\sigma}\\
    &=& \frac{1-4b + \alpha(6b + 4d - 1)}{(4b-1)\sigma}\frac{\partial t}{\partial\sigma}\frac{\partial}{\partial t}.
\end{eqnarray*}
On the other hand, since $\frac{\partial t}{\partial\sigma} = \frac{\sqrt{4b-1}}{\sigma}$, we have
\begin{eqnarray*}
    \nabla^{(\alpha)}_{\frac{\partial}{\partial\sigma}}\frac{\partial}{\partial\sigma} &=& \nabla^{(\alpha)}_{\frac{\partial t}{\partial\sigma}\frac{\partial}{\partial t}}\frac{\partial t}{\partial \sigma}\frac{\partial}{\partial t}\\
    &=& \frac{\partial t}{\partial \sigma}\sqrt{4b-1}\frac{(-1)}{\sigma^2}\frac{\partial\sigma}{\partial t}\frac{\partial }{\partial t} + \left(\frac{\partial t}{\partial \sigma}\right)^2 \nabla^{(\alpha)}_{\frac{\partial}{\partial t}}\frac{\partial}{\partial t} .
\end{eqnarray*}
Comparing two equations above, we have
\begin{equation*}
    \nabla^{(\alpha)}_{\frac{\partial}{\partial t}}\frac{\partial}{\partial t} = \frac{\alpha(6b + 4d - 1)}{(4b-1)^{\frac{3}{2}}}\frac{\partial}{\partial t}.
\end{equation*}
The only solution of $(\ref{difeq})$ when $l$ is constant is 
\begin{equation}\label{only_constant_sol}
    k = \frac{2}{\sqrt{4b-1}},\quad l= \frac{1}{\sqrt{4b-1}}.
\end{equation}
If we set $k,l$ as (\ref{only_constant_sol}), dually flat connections which are compatible with the structure of warped products are constructed.

For dually flat connections $D,D^*$ constructed by Theorem \ref{constant_connection} using (\ref{only_constant_sol}), we have 
\begin{equation*}
    \frac{1}{2}(D_{\partial_t}\partial_t - D^*_{\partial_t}\partial_t) = \frac{2}{\sqrt{4b-1}}\frac{\partial}{\partial t}.
\end{equation*}
On the other hand, for $\alpha$-connections of elliptic distributions $\nabla^{(\alpha)}$, we have
\begin{equation*}
    \frac{1}{2}(\nabla^{(-\alpha)}_{\partial_t}\partial_t - \nabla^{(\alpha)}_{\partial_t}\partial_t) = \frac{-\alpha(6b + 4d - 1)}{(4b-1)^{\frac{3}{2}}}\frac{\partial}{\partial t}.
\end{equation*}
\begin{example}
    We compare dually flat connections on the line $B = \mathbb{R}_{>0}$ constructed by Theorem \ref{constant_connection} with dually flat connections among $\alpha$-connections in Table \ref{table:compare}.
    Note that $\alpha$-connections of Cauchy distribution are not dually flat connections \cite{elliptic} and 
Student's $t$ distributions are dually flat when $\alpha = \pm{\frac{k + 5}{k-1}}$.
\begin{table}[hbtp]
    \caption{Comparing dually flat connections}
    \label{table:compare}
    \centering
    \begin{tabular}{lllll}
      \hline
        Connections on $B = \mathbb{R}_{>0}$ & Gauss  & Cauchy   &  Student's t  \\
      \hline \hline
      $\frac{2}{\sqrt{4b-1}}\partial_t$  & $\sqrt{2}\partial_t$ & $2\sqrt{2}\partial_t$ & $\sqrt{\frac{2(k+3)}{k}}\partial_t$\\
      $\frac{-\alpha(6b + 4d - 1)}{(4b-1)^{\frac{3}{2}}}\partial_t$  &$\sqrt{2}\partial_t (\alpha = 1)$   & none & $\frac{k\sqrt{2}}{k + 3}\partial_t (\alpha = \frac{k + 5}{k-1})$\\
      \hline
    \end{tabular}
  \end{table}

\end{example}

Although every dually flat connections which is compatible with structure of warped product appeared before this appendix were realized as one of $\alpha$-connections, it is observed in this appendix that it does not always happen. 

\begin{remark}
    In \cite{furuhata}(4.2), they defined $\alpha$-connections with respect to a metric $g = \frac{dx^2 + \lambda^2 dy^2}{y^2}$ ($\lambda > 0$) on the upper half plane $\{(x,y) | x\in\mathbb{R}, y > 0\}$.
    By direct calculations, we see that their $\alpha$-connections are also compatible with the structure of warped product and their $\alpha$-connections are dually flat when $\alpha = \pm{1}$.
\end{remark}

\begin{acknowledgements}
    The author wishes to thank his supervisor
     Shin-ichi Ohta for his support and encouragement. He would also like to express his gratitude to Akio Fujiwara for many valuable discussions and Hiroshi Matsuzoe for helpful comments.
     He also thanks Masaki Yoshioka for fruitful conversations on the topic in Section 7.
\end{acknowledgements}

\end{document}